\numberwithin{equation}{section}
\newtheorem{theorem}{Theorem}[section]
\newtheorem{lemma}[theorem]{Lemma}
\newtheorem{proposition}[theorem]{Proposition}
\newtheorem{definition}[theorem]{Definition}
\newtheorem{problem}[theorem]{Problem}
\newtheorem{conjecture}[theorem]{Conjecture}
\theoremstyle{remark}
\newtheorem{remark}[theorem]{Remark}
\newtheorem{example}[theorem]{Example}
\newcommand{\C}{\mathbb C}
\newcommand{\Z}{\mathbb Z}
\newcommand{\PP}{\mathbb P}
\newcommand{\A}{\mathbb A}
\newcommand{\F}{\mathcal{F}}
\newcommand{\OO}{\mathcal{O}}
\newcommand{\Ee}{\mathcal{E}}
\newcommand{\ZZ}{\mathcal{Z}}
\newcommand{\ttt}{\mathfrak{t}}
\newcommand{\w}{w}
\newcommand{\gl}{\mathfrak{gl}}
\newcommand{\ssl}{\mathfrak{sl}}
\newcommand{\geg}{\mathfrak{g}}
\newcommand{\kek}{\mathfrak{k}}
\newcommand{\Geg}{\mathcal{G}}
\newcommand{\bb}{\mathfrak b}
\newcommand{\I}{\mathcal I}
\newcommand{\Ss}{\mathcal S}
\newcommand{\Gs}{\mathrm G}
\newcommand{\Ts}{\mathrm T}
\newcommand{\Ks}{\mathrm K}
\newcommand{\Bs}{\mathrm B}
\newcommand{\Ps}{\mathrm P}
\newcommand{\Ws}{\mathrm W}
\newcommand{\rk}{\operatorname{rk}}
\newcommand{\Cs}{\mathbb C^\times}
\newcommand{\Vect}{\mathrm{Vect}}
\renewcommand{\bf}{\bfseries}
\newcommand{\Tr}{\operatorname{Tr}}
\newcommand{\diag}{\operatorname{diag}}
\newcommand{\Spec}{\operatorname{Spec}}
\newcommand{\pt}{\operatorname{pt}}
\newcommand{\ch}{\operatorname{ch}}
\newcommand{\id}{\operatorname{id}}
\newcommand{\Ad}{\operatorname{Ad}}
\newcommand{\Lie}{\operatorname{Lie}}
\newcommand{\GL}{\operatorname{GL}}
\newcommand{\SL}{\operatorname{SL}}
\newcommand{\PGL}{\operatorname{PGL}}
\newcommand{\SO}{\operatorname{SO}}
\newcommand{\Span}{\operatorname{span}}
\newcommand{\Stab}{\operatorname{Stab}}
\newcommand{\reg}{\mathrm{reg}}
\newcommand{\tot}{\mathrm{tot}}
\newcommand{\Gr}{\operatorname{Gr}}
\newcommand{\Fix}{\operatorname{Fix}}
\newcommand{\red}{\operatorname{red}}
\ifodd\thepage{\footnotesize {\bf Variations on cohomology rings and zero schemes}
\hfill{\bf\thepage}
}\else
\title{Variations on cohomology rings and zero schemes}
\author{
Kamil Rychlewicz\footnote{École Polytechnique Fédérale de Lausanne, {\tt kamil.rychlewicz@epfl.ch}}
}
\date{}
\begin{document}

\maketitle 

\begin{abstract}
We extend the theorem of Hausel and the author \cite{HR} that relates equivariant cohomology rings and algebras of functions on zero schemes. This paper combines three separate results:
\begin{enumerate}
    \item We prove that for a reductive group $\Gs$ acting on a smooth projective variety one can see the equivariant cohomology ring as the ring of functions on the zero scheme over the Kostant section, provided that some transversality condition is satisfied. In particular, we show that the conclusion holds for spherical varieties;
    \item We show a version for singular varieties, where in general we only recover a part of equivariant cohomology ring, generated by Chern classes;
    \item We show that an analogous result, connecting equivariant K-theory to the ring of functions on the fixed-point scheme, holds for GKM spaces.    
\end{enumerate}
\end{abstract}

\section{Introduction}
The classical Poincar\'{e}--Hopf theorem allows one to relate the Euler characteristic, a topological invariant of a manifold, to local data extracted from the zeros of a vector field. As the seminal work of Carrell and Lieberman shows, one can extract more information if the manifold is a complex algebraic variety. Precisely, one has the following theorem \cite{CL}:

\begin{theorem}
 Let $X$ be a smooth projective complex algebraic variety of dimension $n$. Consider an algebraic vector field $V\in\Vect(X)$ and assume that its zero set is isolated, but nonempty. Let $Z$ denote the zero scheme of $V$. Then the coordinate ring $\C[Z]$ admits an increasing filtration 
 $$F_\bullet: 0 = F_{-1} \subset F_0 \subset F_1 \subset \dots \subset F_n = \C[Z]$$
 such that there is an isomorphism
 $$H^*(X,\C) \simeq \Gr_F(\C[Z])$$
 of graded algebras. The degree on the left is twice the degree on the right, and in particular the odd cohomology of $X$ vanishes.
\end{theorem}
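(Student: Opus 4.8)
The plan is to extract both sides of the isomorphism from a single complex of sheaves on $X$ and then compare them through a spectral sequence. First I would form the contraction complex
$$\mathcal K^\bullet:\qquad 0\to\Omega^n_X\xrightarrow{\ \iota_V\ }\Omega^{n-1}_X\xrightarrow{\ \iota_V\ }\cdots\xrightarrow{\ \iota_V\ }\Omega^1_X\xrightarrow{\ \iota_V\ }\OO_X\to 0,$$
where $\iota_V$ is contraction against $V\in\Vect(X)$ and $\Omega^p_X$ is placed in cohomological degree $-p$. In local coordinates, writing $V=\sum f_i\,\partial_{x_i}$, contraction with $V$ is exactly the Koszul differential on $(f_1,\dots,f_n)$; since the zero set of $V$ is isolated this is a regular sequence, so $\mathcal K^\bullet$ is exact away from its rightmost term, where the cohomology is $\OO_X/(f_1,\dots,f_n)=\OO_Z$. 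Thus $\mathcal K^\bullet$ is a finite locally free resolution of $\OO_Z$.

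Since $Z$ is zero-dimensional we have $H^i(Z,\OO_Z)=0$ for $i>0$, so the hypercohomology $\mathbb H^\bullet(X,\mathcal K^\bullet)\cong H^\bullet(X,\OO_Z)$ is concentrated in degree $0$, equal to $\C[Z]$. On the other hand, the naive form-degree filtration on $\mathcal K^\bullet$ gives a spectral sequence with $E_1^{-p,q}=H^q(X,\Omega^p_X)$, differential induced by $\iota_V$, abutting to $\mathbb H^{q-p}(X,\mathcal K^\bullet)$; the resulting filtration on $\mathbb H^0=\C[Z]$ is (a reindexing of) the $F_\bullet$ in the statement, and $\Gr_F\C[Z]=\bigoplus_p E_\infty^{-p,p}$ is a subquotient of $\bigoplus_p H^p(X,\Omega^p_X)$. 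All of this is multiplicative: $\Omega^\bullet_X$ is a sheaf of graded-commutative algebras, $\iota_V$ is an odd derivation of it, and the form-degree filtration is an algebra filtration, so $F_\bullet$ is a ring filtration and $\Gr_F\C[Z]$ inherits a ring structure which, after the identification below, matches the cup product on $\bigoplus_p H^p(X,\Omega^p_X)$ via the Dolbeault description of cup product.

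The heart of the proof — and the step I expect to be the main obstacle — is the Hodge-theoretic input that $H^q(X,\Omega^p_X)=0$ for $p\neq q$. Granting this, the $E_1$-page is supported on the diagonal $p=q$; every differential $d_r$ ($r\geq 1$) strictly changes $p$ and hence lands off the diagonal, so the sequence degenerates at $E_1$, and
$$\Gr_F\C[Z]\;\cong\;\bigoplus_p H^p(X,\Omega^p_X)\;\cong\;\bigoplus_p H^{2p}(X,\C)\;=\;H^*(X,\C),$$
with cohomological degree $2p$ corresponding to filtration level $p$; in particular the Hodge decomposition forces the odd cohomology of $X$ to vanish, and the dimensions are consistent since $\dim_\C\C[Z]=\deg c_n(T_X)=\chi_{\mathrm{top}}(X)=\sum_p h^{p,p}$. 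Proving the vanishing $H^q(X,\Omega^p_X)=0$ for $p\neq q$ is precisely where compactness and the projective (Kähler) hypothesis enter essentially: I would fix a Kähler metric on $X$ and study the operator $\iota_V$ together with $\bar\partial$ and the Kähler identities on harmonic forms, which is the content of Carrell--Lieberman's original vanishing theorem. (When $V$ is the fundamental vector field of a $\C^\times$-action one can instead argue through the Cartan model and equivariant localization, as in \cite{HR}; but for a general vector field with isolated zeros the Hodge-theoretic argument seems unavoidable.)
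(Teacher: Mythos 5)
Your proposal is correct and follows essentially the same skeleton as the paper's own sketch: the Koszul complex of $\iota_V$ resolving $\OO_Z$, the hypercohomology spectral sequence with $E_1^{-p,q}=H^q(X,\Omega^p_X)$ abutting to $\C[Z]$ in degree zero, degeneration at $E_1$, and the resulting multiplicative filtration. The one structural difference is where the K\"ahler input enters. You take the vanishing $H^q(X,\Omega^p_X)=0$ for $p\neq q$ as the key lemma (Carrell--Lieberman's vanishing theorem) and then get degeneration for free, since all differentials leave the antidiagonal. The paper's sketch runs the logic in the opposite order: it first proves degeneration at $E_1$ directly, using the Lefschetz operator to show the differentials induced by $\iota_V$ vanish on Dolbeault cohomology, and only then deduces the off-diagonal Hodge vanishing from the fact that the abutment $H^{q-p}(X,\OO_Z)$ is concentrated in degree $0$ for a zero-dimensional $Z$. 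The second order is the one in the original Carrell--Lieberman argument, and it is slightly stronger in spirit: the vanishing theorem you invoke is itself usually proved via that degeneration, so your route quietly assumes the harder half of their work as a black box. This is legitimate as a citation, but if you intend the harmonic-form argument with the K\"ahler identities to be a self-contained alternative proof of the vanishing, you should be aware that the standard proof does not proceed that way, and you would need to supply the Lefschetz-operator commutation argument in any case. Everything else --- the regularity of the section, the concentration of $H^*(Z,\OO_Z)$ in degree $0$, the multiplicativity of the filtration, and the dimension check against $\deg c_n(T_X)$ --- is accurate.
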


In principle determining the filtration $F_\bullet$ is a difficult task. However, one can show \cite{AC,ACLS} that in certain situations it is induced by a grading, and hence $H^*(X,\C)\simeq \C[Z]$. This is the case when the Borel subgroup of triangular matrices in $\SL_2$ acts on $X$, and the vector field $V$ is associated to the element $e = \left(\begin{smallmatrix} 0 & 1\\ 0 & 0 \end{smallmatrix}\right) \in \ssl_2$. After \cite{BC} we say in that case that the group acts \emph{regularly}. By a result of Horrocks \cite{Horrocks}, if the vector field $V$ associated to an action of the additive group has isolated zeros, it has in fact a unique zero. This seemingly strong condition holds for actions of principal $\SL_2$ subgroups on many well-studied spaces, including flag varieties, smooth Schubert varieties and Bott--Samelson resolutions.

In \cite{BC} Brion and Carrell proved that in a similar way one can recover $\Cs$-equivariant cohomology rings. One has to deform the zero scheme, to obtain a scheme over $\Spec H^*_{\Cs}(\pt) = \A^1$.

\begin{theorem}
Assume that $\Bs_2$, the Borel subgroup of upper triangular matrices in $\SL_2$, acts regularly on a smooth projective variety $X$. Let $\Ss = e + \ttt \subset \bb_2$, where $\ttt$ is the diagonal Cartan subalgebra of $\ssl_2$. Let $\ZZ$ be the zero scheme of the vector field defined on $\Ss\times X$ by the infinitesimal action of the Lie algebra. Then $\ZZ \simeq \Spec H^*_{\Cs}(X)$, where $H^*_{\Cs}(X)$ is the cohomology ring of $X$ equivariant with respect to one-dimensional diagonal torus in $\Bs_2$. Moreover, the structure of algebra over $H^*_{\Cs}(\pt)\simeq \C[\ttt]$ is given by the projection map $\ZZ\to \Ss\simeq \ttt$. The grading on $H^*_{\Cs}$ is determined by the weights of the $\Cs$ representation on $\C[\ZZ]$. It comes from the action of $\Cs$ on $\ZZ\subset \Ss\times X$ given by
$$t.(e+v,x) = (e+t^{-2}v,H^tx),$$
where for $t\in\Cs$ we denote $H^t = \left(\begin{smallmatrix} t & 0\\ 0 & t^{-1} \end{smallmatrix}\right)$.
\end{theorem}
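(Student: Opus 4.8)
The plan is to realise both $\C[\ZZ]$ and $H^*_{\Cs}(X)$ as free graded modules over $\C[\ttt]\simeq H^*_{\Cs}(\pt)$ whose reduction modulo the graded maximal ideal $\mathfrak m\subset\C[\ttt]$ is the non-equivariant algebra $\C[Z_e]\simeq H^*(X,\C)$, to construct a $\C[\ttt]$-algebra homomorphism between them which reduces modulo $\mathfrak m$ to this identification, and to upgrade it to an isomorphism by a graded Nakayama argument. Here $Z_e=\ZZ|_{v=0}$ is the zero scheme of the vector field $V_e$ on $X$ generated by $e$, and regularity of the $\Bs_2$-action says precisely that $Z_e$ is isolated; by Horrocks it is then a single fat point, and by the Akyildiz--Carrell refinement \cite{AC} the torus $\Cs=\exp(\C h)$, which normalises $\exp(\C e)$ because $[h,e]=2e$, acts on $Z_e$ and splits the Carrell--Lieberman filtration, giving a graded isomorphism $\C[Z_e]\simeq H^*(X,\C)$; in particular $H^{\mathrm{odd}}(X)=0$ and $H^p(X,\Omega^q_X)=0$ for $p\ne q$. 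Vanishing of odd cohomology makes $X$ equivariantly formal for $\Cs$, so $H^*_{\Cs}(X)$ is free over $\C[\ttt]$ with $H^*_{\Cs}(X)\otimes_{\C[\ttt]}\C\simeq H^*(X)$; write $d:=\dim_{\C}H^*(X)$ for its rank.

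I first establish the corresponding facts for $\C[\ZZ]$. The $\Cs$-action $t.(e+v,x)=(e+t^{-2}v,H^tx)$ is exactly the one making the section $V$ equivariant — the twist by $-2$ being forced by $\Ad(H^t)e=t^2e$ — and it contracts $\Ss$ onto its unique $\Cs$-fixed point $e$. Since $X$ is proper, $\lim_{t\to\infty}t.(e+v,x)=(e,\lim_{t\to\infty}H^tx)$ exists in the closed subscheme $\ZZ$ for every $(e+v,x)\in\ZZ$; hence $\ZZ$ equals its Bia\l{}ynicki-Birula attractor for this action, the limit morphism $\ZZ\to\ZZ^{\Cs}$ is affine, and since $\ZZ^{\Cs}$ is a closed subscheme of the fat point $Z_e$ (hence affine) the scheme $\ZZ$ is itself affine. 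Moreover $\C[\ZZ]$ is non-negatively graded (coordinate rings of attractors are), the coordinate on $\Ss\simeq\ttt$ lies in degree $2$, and $\C[\ZZ]/\mathfrak m\C[\ZZ]=\C[\ZZ|_{v=0}]=\C[Z_e]$ is $d$-dimensional, so graded Nakayama shows $\C[\ZZ]$ is a finitely generated $\C[\ttt]$-module; thus $\ZZ\to\Ss$ is finite. For $v=sh$ with $s\ne0$ one has $\Ad\left(\begin{smallmatrix}1 & (2s)^{-1}\\ 0 & 1\end{smallmatrix}\right)(e+sh)=sh$, so the fibre $\ZZ|_{v=sh}$ is carried isomorphically by a unipotent element of $\Bs_2$ onto the zero scheme of $V_h$, i.e.\ onto $X^{\Cs}$; consequently $X^{\Cs}$ is finite as well, $\ZZ$ has pure dimension one, $\ZZ$ is a local complete intersection in the smooth $(n+1)$-fold $\Ss\times X$ (the zero scheme of the section $V$ of the rank-$n$ bundle $\mathrm{pr}_X^*TX$), hence Cohen--Macaulay, and a finite morphism from a Cohen--Macaulay scheme of pure dimension one to the smooth curve $\Ss$ is flat. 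Therefore $\C[\ZZ]$ is free over $\C[\ttt]$, again of rank $d$ (the length of the fibre $Z_e$), and the Koszul complex $(\Lambda^\bullet\mathrm{pr}_X^*\Omega^1_X,\iota_V)$ resolves $\OO_\ZZ$ on $\Ss\times X$, $\Cs$-equivariantly.

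The remaining ingredient is a $\Cs$-equivariant $\C[\ttt]$-algebra homomorphism $\Phi\colon H^*_{\Cs}(X)\to\C[\ZZ]$ compatible with $\ZZ\to\Ss\simeq\ttt$. When $H^*_{\Cs}(X)$ is generated over $\C[\ttt]$ by equivariant Chern classes of $\Cs$-linearised vector bundles — which covers the main applications of this paper — one lets $\Phi(c^{\Cs}_k(F))$ be the function on $\ZZ$ whose value at $(e+v,x)$ is the $k$-th elementary symmetric function of the eigenvalues of the endomorphism of the fibre $F_x$ induced by $e+v\in\bb_2$ (the equivariant Chern--Weil, or localisation, map), and checks it is a well-defined graded $\C[\ttt]$-algebra homomorphism. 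In general one argues instead inside the $\Cs$-equivariant Koszul complex above, comparing the de Rham differential with $\iota_{V_e}$ by the standard Cartan-calculus relations among $d$, $\iota_{V_e}$, $\iota_{V_h}$ and $\mathcal L_{V_h}$ exactly as in Carrell--Lieberman \cite{CL}, and extracting $\Phi$ from the resulting spectral sequence, whose $E_1$-page is $\C[\ttt]\otimes H^*(X)$ because $H^p(X,\Omega^q_X)=0$ for $p\ne q$. In either construction, reducing $\Phi$ modulo $\mathfrak m$ recovers precisely the Carrell--Lieberman/Akyildiz--Carrell isomorphism $H^*(X)\xrightarrow{\ \sim\ }\C[Z_e]$.

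Finally, $\Phi$ is an isomorphism modulo $\mathfrak m$; as $\C[\ZZ]$ is a finitely generated, non-negatively graded $\C[\ttt]$-module and $\mathfrak m$ has positive degree, graded Nakayama makes $\Phi$ surjective, and a surjection between free $\C[\ttt]$-modules of the same finite rank $d$ over the integral domain $\C[\ttt]$ is an isomorphism. Compatibility with the structures of algebras over $H^*_{\Cs}(\pt)\simeq\C[\ttt]$ — equivalently, with $\ZZ\to\Ss\simeq\ttt$ — and with the gradings is built into the construction, the grading on $H^*_{\Cs}(X)$ being transported to the weights of the $\Cs$-representation on $\C[\ZZ]$ coming from the displayed action; hence $\ZZ\simeq\Spec H^*_{\Cs}(X)$ as stated. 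The point where the real work lies is the construction of $\Phi$ as an honest ring homomorphism landing in the \emph{integral} ring $\C[\ZZ]$ rather than only in its localisation after inverting the degree-$2$ generator: localisation (or Atiyah--Bott theory) identifies the two sides only over $\Ss\setminus\{e\}$, and it is precisely in bridging the central fibre — where one uses $H^{\mathrm{odd}}(X)=0$, the Hodge vanishing $H^{p,q}(X)=0$ for $p\ne q$, and the fact that the Koszul complex is a genuine resolution — that the regularity hypothesis is essential.
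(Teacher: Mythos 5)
Your overall architecture is sound but genuinely different from the route the paper takes (this theorem is quoted from Brion--Carrell, and the paper's own generalisations in Theorem \ref{nonreg} and Lemma \ref{lemrho} follow the same [BC]/[HR] template). The paper's mechanism is: define $\rho$ on \emph{all} of $H^*_{\Cs}(X)$ at once by localisation to torus-fixed points, $\rho(c)(e+w,x)=c|_{\zeta_i}(w)$, so that multiplicativity and well-definedness are automatic; verify regularity of $\rho(c)$ by matching it with the trace formula on Chern classes; prove \emph{injectivity} from the generic (regular semisimple) fibres via density of $\ttt^{\reg}$ and injectivity of localisation; and get bijectivity by comparing Poincar\'{e} series through the Koszul hypercohomology spectral sequence. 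You instead work from the \emph{central} fibre: Akyildiz--Carrell at $v=0$, freeness of $\C[\ZZ]$ over $\C[\ttt]$ by miracle flatness (finite + Cohen--Macaulay over a smooth curve), and graded Nakayama plus a rank count. That is an attractive alternative: it buys you reducedness and the rank statement essentially for free, and your Bia\l{}ynicki-Birula/attractor argument for affineness is a legitimate variant of the paper's (which places $\ZZ$ inside $\Ss\times W$ for $W$ the attracting cell of the unique zero of $e$). The flatness step does not generalise to higher-rank groups, where $\Ss$ is no longer a curve, but for $\Bs_2$ it is clean.

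The one place where your sketch has a real soft spot is the construction of $\Phi$, which you yourself flag as ``where the real work lies.'' Two issues: (i) you need the endomorphism of $F_x$ induced by $e+v\in\bb_2$, so the bundles must be $\Bs_2$-linearised, not merely $\Cs$-linearised; the correct input is \cite[Lemma 3.12]{HR}, that $H^*_{\Cs}(X)$ is generated over $\C[\ttt]$ by Chern classes of $\Bs_2$-linearised bundles. (ii) Defining $\Phi$ only on such generators leaves open why the assignment respects all relations among Chern classes; ``checks it is well-defined'' is precisely the non-trivial point, and the paper's device of defining the map pointwise by localisation (which makes it a ring homomorphism by construction) and using the trace formula only to certify regularity is the standard way to close this. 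Your fallback via the multiplicative Koszul spectral sequence can also be made to work, but as stated it produces a filtered isomorphism that must still be split by the $\Cs$-action before it is a graded ring map. With either of these repairs your Nakayama endgame goes through.
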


In \cite{HR} Hausel and the author extend the result to a much wider class of algebraic groups, called \emph{principally paired}. The class includes all connected reductive groups and their parabolic subgroups. The affine plane $\Ss$ in general has to be replaced with a Kostant section inside the Lie algebra $\geg$ of the acting group $\Gs$. The torus action is analogous, and one gets the affine zero scheme $\ZZ$ over $\Spec H^*_{\Gs}(\pt)$, and hence $\C[\ZZ]$ is a graded algebra over $H^*_{\Gs}(\pt)$, proved to be canonically isomorphic to $H^*_{\Gs}(X)$. As before, the assumption is that $X$ is smooth projective, and the regular nilpotent has a unique zero. A version for singular varieties is also proved based on an analogous result of \cite{BC}. The assumptions require the singular variety to be embedded in a smooth variety with regular action, and the restriction on cohomology rings has to be surjective.

This paper can be viewed as a continuation of \cite{HR}. It presents a three-fold generalisation of the results.

First, in Section \ref{secnon}, we show that the regularity assumption can be relaxed. We only ask for the zero scheme $\ZZ$ to be of the same dimension as the base $\Ss$, i.e equal to the rank of $\Gs$. We prove in Theorem \ref{nonreg} that the ring of functions on $\ZZ$ is still canonically isomorphic to the equivariant cohomology ring, as a graded algebra over $H^*_{\Gs}(\pt)$. However, the cost of this generalisation is losing the affineness of $\ZZ$, which only holds true if the action is regular. It then poses computational difficulties to potential calculations on cohomology rings. The examples are however plentiful, as we show that every spherical variety satisfies the conditions.

Second, in Section \ref{secsing} we extend the singular case. We consider a smooth projective ambient variety $X$ with a regular $\Gs$-action. As above, we construct the zero scheme $\ZZ^X\subset \Ss\times X$, which we know is isomorphic to $\Spec H^*_{\Gs}(X)$. We then take any singular $\Gs$-equivariant projective subvariety $Y\subset X$ and consider the reduced intersection $\ZZ^Y = (\ZZ^X\cap (\Ss\times Y))^{\red}$. We know by \cite{BC,HR} that $\ZZ^Y \simeq \Spec H^*_{\Gs}(Y)$ if the restriction $H^*(X)\to H^*(Y)$ is surjective. In fact, one can easily see that the surjectivity assumption is also necessary. Nevertheless, we show that even without this assumption one can read off a ``geometric'' part of cohomology ring from $\ZZ^Y$. Concretely, we denote by $\widetilde{H}^*_\Gs(Y) \subset H^*_\Gs(Y)$ the subalgebra generated by the Chern classes of $\Gs$-linearised vector bundles on $Y$. We then show in Theorem \ref{singful} that $\ZZ^Y\simeq \Spec \widetilde{H}^*_\Gs(Y)$. This way we get some information on the cohomology ring of e.g. discriminantal varieties, cf. Example \ref{discex}.

Lastly, in Section \ref{seck} we discuss the K-theoretic conjecture of Hausel. We first define the \emph{fixed-point scheme} $\Fix_\Gs(X)$ of an algebraic group $\Gs$ acting on a variety $X$. Its points parametrise the pairs $(g,x)\in \Gs\times X$ such that $gx = x$. Concretely, it is defined via the fibre product\footnote{This definition was provided by Jakub L\"{o}wit \cite{jakub}}

$$ \begin{tikzcd}
  \Fix_\Gs(X) \arrow{r}\arrow{d} &
  \Gs\times X\arrow{d}{\rho\times \pi_2}  \\
  X \arrow{r}{\Delta} &
  X\times X.
\end{tikzcd}$$

\begin{conjecture}
 Assume that a principally paired group $\Gs$ acts on a smooth projective variety regularly. Let $\Gs$ act on $\Fix_\Gs(X)$ by $g\cdot (h,x) = (ghg^{-1},gx)$. Then the ring $\C[\Fix_\Gs(X)]^\Gs$ of $\Gs$-invariant functions on $\Fix_\Gs(X)$ is an algebra over $\C[\Gs]^\Gs\cong K^0_{\Gs}(\pt)\otimes \C$ isomorphic to the complexified equivariant algebraic K-theory $K^0_{\Gs}(X)\otimes \C$.
\end{conjecture}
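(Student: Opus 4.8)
The plan is to construct a natural ``trace'' homomorphism from $K^0_\Gs(X)\otimes\C$ into $\C[\Fix_\Gs(X)]^\Gs$ and then to see that it becomes an isomorphism after identifying both sides, in the GKM case, with the combinatorial model attached to the GKM graph. (Thus the strategy below attacks the conjecture under the additional hypothesis that $X$ is a GKM space for a maximal torus of $\Gs$; this is the generality in which I would expect to prove it.) The trace map is defined as follows. If $E$ is a $\Gs$-equivariant vector bundle on $X$ and $(g,x)\in\Fix_\Gs(X)$, then $gx=x$, so $g$ acts linearly on the fibre $E_x$ and we may set $\phi_E(g,x)=\Tr(g\mid E_x)$; formally, $\phi_E$ is the trace of the automorphism of $\pi_2^*E|_{\Fix_\Gs(X)}$ coming from the equivariant isomorphism $a^*E\cong\pi_2^*E$ on $\Gs\times X$ (here $a$ and $\pi_2$ are the action and the projection, which agree on $\Fix_\Gs(X)$), so $\phi_E\in\C[\Fix_\Gs(X)]$; it is $\Gs$-invariant for $h\cdot(g,x)=(hgh^{-1},hx)$ because $h\colon E_x\to E_{hx}$ intertwines the $g$- and $hgh^{-1}$-actions. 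Additivity of traces in short exact sequences and multiplicativity under $\otimes$ make $[E]\mapsto\phi_E$ a ring homomorphism $\Phi\colon K^0_\Gs(X)\otimes\C\to\C[\Fix_\Gs(X)]^\Gs$; and since $\phi$ of the trivial bundle on $\rho\in R(\Gs)$ is $(g,x)\mapsto\chi_\rho(g)$, the map $\Phi$ intertwines the pullback $K^0_\Gs(\pt)\to K^0_\Gs(X)$ with $\pi_1^*\colon\C[\Gs]^\Gs\to\C[\Fix_\Gs(X)]^\Gs$ under the identification $\C[\Gs]^\Gs\cong R(\Gs)\otimes\C=K^0_\Gs(\pt)\otimes\C$. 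So $\Phi$ is the desired $\C[\Gs]^\Gs$-algebra map, and it remains to show it is bijective.

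\emph{Reduction to a maximal torus.} Fix $T\subseteq\Gs$ maximal, with Weyl group $W$; then $\Fix_T(X)=\{(t,x)\in T\times X:tx=x\}$ is a closed $N(T)$-stable subscheme of $\Fix_\Gs(X)$. Since $X$ is smooth projective one has $K^0_\Gs(X)\otimes\C\cong(K^0_T(X)\otimes\C)^W$, the maps $\Phi_\Gs$ and $\Phi_T$ are compatible via restriction of functions $\C[\Fix_\Gs(X)]^\Gs\to\C[\Fix_T(X)]^{N(T)}=(\C[\Fix_T(X)]^T)^W$, and $\Phi_T$ is $W$-equivariant because traces are conjugation-invariant; moreover this restriction map is injective, because the semisimple locus is dense in $\Fix_\Gs(X)$ (degenerate the unipotent part of $g$ through the one-parameter subgroup it generates, which still fixes $x$) and every semisimple pair is $\Gs$-conjugate into $\Fix_T(X)$, while $\Fix_\Gs(X)$ is reduced in characteristic zero by a local computation. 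Chasing the resulting commutative square reduces the problem to showing that $\Phi_T\colon K^0_T(X)\otimes\C\to\C[\Fix_T(X)]^T$ is an isomorphism, where now $T$ acts by $g\cdot(t,x)=(t,gx)$ and $\C[T]\cong K^0_T(\pt)\otimes\C$ acts via the projection $\Fix_T(X)\to T$.

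\emph{The GKM argument.} Let $p_1,\dots,p_m$ be the $T$-fixed points and, for a $T$-invariant curve $C\cong\PP^1$ joining $p_i$ and $p_j$, let $\chi_C$ be the $T$-weight on its tangent line (up to a sign that is irrelevant below). Consider the commutative triangle $K^0_T(X)\otimes\C\xrightarrow{\ \Phi_T\ }\C[\Fix_T(X)]^T\xrightarrow{\ r\ }\bigoplus_i\C[T]$, where $r$ restricts a function to the ``horizontal'' components $T\times\{p_i\}$. By construction $r\circ\Phi_T$ is the $K$-theoretic localization map $\ell\colon[E]\mapsto\big(t\mapsto\Tr(t\mid E_{p_i})\big)_i$, and by the GKM theorem for equivariant $K$-theory $\ell$ is injective with image the subring $\mathcal K=\{(a_i)_i:\ a_i-a_j\in(1-t^{\chi_C})\ \text{for every }T\text{-curve }C\text{ joining }p_i,p_j\}$. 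I claim $r$ is injective with image inside $\mathcal K$; granting this, $\mathcal K=\im\ell\subseteq\im r\subseteq\mathcal K$ forces $r$ to be an isomorphism onto $\mathcal K$, hence $\Phi_T$ is an isomorphism and, by the previous paragraph, so is $\Phi_\Gs$. For the injectivity of $r$: a formal/étale-local analysis of $\Fix_T(X)$ — using $T$-invariant coordinates at a fixed point, and a Luna slice for the (diagonalizable, hence reductive) stabilizer elsewhere — reduces everything to the subscheme $\{(t,v):(\chi_k(t)-1)\,v_k=0\ \text{for all }k\}$ of $T\times\bigoplus_k\C_{\chi_k}$, whose defining ideal is radical because $\C[T]/(\chi_k(t)-1:k\in S)$ is a product of Laurent-polynomial rings for every $S$; this shows $\Fix_T(X)$ is reduced and that its irreducible components are the products $S'\times F$ with $S'$ a subtorus and $F$ a connected component of $X^{S'}$. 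Each such $F$ is a projective $T$-variety, hence contains a $T$-fixed point (Borel), so every component of $\Fix_T(X)$ meets $T\times\{p_i\}$ for some $i$, while a $T$-invariant function on $S'\times F$ is pulled back from $S'$ (as $F$ is projective and connected); vanishing on the horizontal components therefore forces vanishing everywhere. For $\im r\subseteq\mathcal K$: the product $\ker\chi_C\times C$ lies in $\Fix_T(X)$, an invariant function restricted to it is pulled back from $\ker\chi_C$, and comparing its restrictions to $\ker\chi_C\times\{p_i\}$ and $\ker\chi_C\times\{p_j\}$ shows that $a_i-a_j$ vanishes on $\ker\chi_C$, i.e.\ lies in the radical ideal $(1-t^{\chi_C})$.

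\emph{Main obstacle.} The heart of the argument is the local structure of $\Fix_T(X)$ in the last step — its reducedness and the description of its irreducible components — which is where smoothness of $X$ and the GKM hypothesis are genuinely used, and whose analogue for non-GKM $X$ is unclear; this is why I would treat only the GKM case. The density/reducedness input behind the injectivity of $\C[\Fix_\Gs(X)]^\Gs\hookrightarrow(\C[\Fix_T(X)]^T)^W$ in the reduction to the torus is a secondary technical point. Once the GKM theorem for equivariant $K$-theory is available, the remaining diagram chase is purely formal and, crucially, sidesteps the usual difficulty of proving surjectivity onto the GKM congruence ring directly.
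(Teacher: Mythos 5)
Your core argument for the torus/GKM case is essentially the paper's own proof of Theorem \ref{gkmk}: the same trace map $[\Ee]\mapsto\left((t,p)\mapsto\Tr_{\Ee_p}(t)\right)$, the same appeal to the K-theoretic localisation theorem of Rosu--Knutson to identify $K^0_\Ts(X)\otimes\C$ with the GKM congruence ring, injectivity of restriction to $\Ts\times X^\Ts$ via connected projective subvarieties through every point (the paper uses the orbit closures $\{t\}\times\overline{\Ts\cdot p}$ and its Lemma \ref{projinj} where you use the irreducible components), and the containment of the image in the congruence ring via the curves $\ker\chi_C\times \overline{E_i}$. The sandwich argument closing the proof is also the same. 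So on the part both arguments actually cover, you and the paper agree.

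The differences are exactly the two places where you claim more than the paper does, and both are where your gaps sit. First, the paper proves the statement only for the \emph{reduced} scheme $\Fix'_\Ts(X)=\Fix_\Ts(X)^{\red}$ and explicitly records (in the remark after Theorem \ref{gkmk}) that it expects but cannot prove reducedness of $\Fix_\Ts(X)$. Your \'etale-local reduction to the model $\{(t,v):(\chi_k(t)-1)v_k=0\}$ and the assertion that this ideal is radical are plausible but not proved: knowing that each stratum $\C[\Ts]/(\chi_k-1:k\in S)$ is reduced does not by itself show that the ideal generated by the products equals the intersection of the component ideals, and the passage from a linearisation of the $\Ts$-action near a point to a local identification of the \emph{scheme} $\Fix_\Ts(X)$ with this model is not justified. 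This is avoidable (as the paper shows) by simply working with the reduction throughout, since the target of the trace map only sees functions on closed points. Second, your reduction from $\Gs$ to a maximal torus is not in the paper at all, and it leans on unestablished claims: reducedness of $\Fix_\Gs(X)$, density of the semisimple locus in it, and injectivity of $\C[\Fix_\Gs(X)]^\Gs\to(\C[\Fix_\Ts(X)]^\Ts)^\Ws$. Note finally that neither your argument nor the paper's proves the conjecture as stated: a torus action on a positive-dimensional variety is never regular, and a regular $\Gs$-action need not be GKM, so the GKM theorem is a parallel result rather than a special case. You flag this yourself, but the reduction paragraph should not be presented as progress on the regular case.
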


In Theorem \ref{gkmk} we prove that the statement holds true for smooth projective GKM varieties, i.e. varieties equipped with a torus action with finitely many 0- and 1-dimensional orbits. Notice that a torus action is never regular on a variety of positive dimension. Therefore our result does not directly constitute a case of the conjecture.

In the same spirit, the results on equivariant cohomology in \cite{HR} were shown to hold true both for regular varieties and for GKM spaces, but without a clear connection between the two cases. However, we hope that the two results have a common generalisation into one framework, both in cohomology and in K-theory case.

The results of this paper form a part of the author's PhD thesis \cite{Rych}. For more details on the theory, and a broader historical account, we invite the reader to consult the thesis itself.

\subsection{Notation}
All the algebraic schemes in this paper, and in particular all the algebraic groups, are defined over the complex numbers. Therefore for any scheme $X$, by $\C[X] = \Gamma(X,\OO_X)$ we denote the ring of global functions on $X$.

We frequently use linear groups $\SL_n$, $\GL_n$, and their Lie algebras $\ssl_n$, $\gl_n$. We use the standard notation for the basis of $\ssl_2$:
$$
e=\begin{pmatrix}
    0 & 1 \\
    0 & 0
\end{pmatrix}, \qquad
f=\begin{pmatrix}
    0 & 0 \\
    1 & 0
\end{pmatrix}, \qquad
h=\begin{pmatrix}
    1 & 0 \\
    0 & -1
\end{pmatrix}.
$$

Additionally, by $\Bs_n\subset \SL_n$ we denote the Borel subgroup of upper triangular matrices of determinant $1$, and by $\bb_n$ we denote the corresponding Lie algebra. In particular, as a vector space, $\bb_2$ is spanned by $e$ and $h$.

For a linear algebraic group $\Gs$ with the Lie algebra $\geg$, we call an element $v\in\geg$ regular, if its centraliser $C_\geg(v)$ in $\geg$ has the dimension equal to the dimension of a maximal torus $\Ts$ in $\Gs$. If a regular element exists, the Lie algebra $\ttt$ of $\Ts$ is a Cartan subalgebra, and its dimension is the rank of the group. The adjoint representation of $\Gs$ restricted to $\Ts$ splits into weight spaces. The weights appearing are elements of $\ttt^*$. The regular elements of $\ttt$ are then those that are not annihilated by any of the nonzero weights thereamong. Recall that for $\Gs$ reductive those are called the roots of the group.

Whenever we talk about singular cohomology groups, those are meant to have complex coefficients. For an algebraic group $\Gs$ and a $\Gs$-variety $X$, we also consider the $\Gs$-equivariant cohomology $H^*_\Gs(X) = H^*(\mathrm{E}\Gs\times_\Gs X, \C)$, defined as the cohomology of the mixed product $\mathrm{E}\Gs\times_\Gs X$. The ($\Gs$-equivariant) K-theory ring $K^0(X)$ ($K^0_\Gs(X)$) of a ($\Gs$-)variety is defined as the Grothendieck group of ($\Gs$-linearised) vector bundles on $X$, with relations $[\Ee] = [\Ee_1]+[\Ee_2]$ for any exact sequence of vector bundles $0\to\Ee_1\to\Ee\to\Ee_2\to 0$. It admits a multiplicative structure given by the tensor product of bundles.

For any $\Z_{\ge 0}$-graded $\C$-algebra $R$ such that every graded piece $R_i$ is finite-dimensional, we define the Poincar\'{e} series of $R$ as
$$P_R(t) = \sum_{i=0}^\infty \dim_\C(R_i) t^i \in \Z \llbracket t \rrbracket.$$

\subsection{Acknowledgements}
This work was a part of the author's work during the PhD studies in Institute of Science and Technology Austria. The work was partially supported by the DOC Fellowship of the Austrian Academy of Sciences \emph{Topology of open smooth varieties with a torus action.}

The author would like to express his gratitude to his PhD advisor, Tam\'{a}s Hausel, for the guidance and useful suggestions, as well as to Michel Brion, Mischa Elkner, Daniel Holmes, Andreas Kretschmer, Jakub L\"{o}wit, Leonid Monin, Shon Ngo, Anna Sis\'{a}k and Michael Thaddeus, for helpful comments and remarks.

The figures in this paper were produced using Wolfram Mathematica software.

\section{Regular actions and zero schemes}

\subsection{A short recap on Kostant sections and equivariant cohomology} \label{recap}
We first recall the definition of a \emph{principally paired group} \cite[Definition 2.20]{HR}.

\begin{definition}
Assume that a connected linear algebraic group $\Gs$ is given, together with a non-trivial homomorphism $\phi:\Bs_2\to\Gs$. We call $\Gs$ \emph{principally paired} if $d\phi(e)$ is a regular element of $\geg$.
\end{definition}

The most important examples of principally paired groups are reductive groups and their parabolic subgroups. For a principally paired group like above, we will abuse the notation and denote $d\phi(e)$ by $e$. The maximal, diagonal torus of $\Bs_2$ is sent by $\phi$ to a one-dimensional torus in $\Gs$. This integrates the image of $h$ along $d\phi$. We will hence denote 
$$H^t = \phi\left(\left(\begin{smallmatrix} t & 0\\ 0 & t^{-1} \end{smallmatrix}\right)\right) \text{ for }t\in \Cs.$$

Notice that the non-triviality of $\phi$ is simply equivalent to the non-triviality of $d\phi(h)$. However, a group can still be principally paired with $e$ mapping to zero. This is the case for any torus, where any closed one-parameter subgroup defines a morphism from $\Bs_2$, factoring via the quotient by the unipotent part.

\begin{example} \label{exgl}
Consider a homomorphism of $\bb_2 = \Lie(\Bs_2)$ to $\gl_{n+1}$ given by
$$e\mapsto \begin{pmatrix}
       0 & 1 & 0 & 0 & \dots & 0\\
       0 & 0 & 1 & 0 & \dots & 0\\
       0 & 0 & 0 & 1 & \dots & 0\\
       \vdots & \vdots & \vdots & \vdots & \ddots & \vdots \\
       0 & 0 & 0 & 0 & \dots & 1 \\
       0 & 0 & 0 & 0 & \dots & 0
      \end{pmatrix},
\qquad
h\mapsto
\begin{pmatrix}
       n & 0 & 0 & 0 & \dots & 0\\
       0 & n-2 & 0 & 0 & \dots & 0\\
       0 & 0 & n-4 & 0 & \dots & 0\\
       \vdots & \vdots & \vdots & \vdots & \ddots & \vdots \\
       0 & 0 & 0 & 0 & \dots & 0 \\
       0 & 0 & 0 & 0 & \dots & -n
\end{pmatrix}.
$$
As the image of $h$ lies in the Lie algebra of the maximal torus, this clearly integrates to a map $\phi:\Bs_2\to \GL_{n+1}$  of algebraic groups. Moreover, notice that the image of $e$ is regular in $\gl_{n+1}$. Therefore, $\GL_{n+1}$ is principally paired. In fact, the image of $\phi$ lies in the Borel subgroup of upper-triangular matrices. This demonstrates that all parabolic subgroups of $\GL_{n+1}$ are principally paired as well. 
\end{example}

For principally paired groups one can then construct an equivalent of the Kostant section $\Ss$, as shown in \cite{Kostsec} for reductive groups. The detailed construction can be found in \cite[2.6.3]{HR}. It depends on a few choices within the Lie algebra, including the choice of the principal pairing homomorphism $\phi:\Bs_2\to \Gs$. We note here the important properties and examples of $\Ss$. Denote by $\Ts$ the maximal torus in $\Gs$ containing $\{H^t\}$ and let $\ttt = \Lie(\Ts)$. We denote by $\Ws = N(\Ts)/\Ts$ the Weyl group of $\Gs$.

\begin{proposition}\label{kostprop}
The Kostant section $\Ss$ defined in \cite[2.6.3]{HR} for a principally paired group $\Gs$ satisfies the following conditions.
\begin{enumerate}
    \item It is an affine subspace of $\geg$, of dimension equal to $r=\rk \Gs$, and containing $e$.
    \item If $\Gs$ is solvable, then $\Ss = e + \ttt$.
    \item If $\Gs$ is reductive, then $\Ss = e + C_\geg(f)$, where $(e,f,h)$ is an $\SL_2$-triple.
    \item The Kostant section parametrises the regular conjugacy types. i.e. all the elements of $\Ss$ are regular and every regular conjugacy orbit intersects $\Ss$ in exactly one point.
    \item The restriction of functions from $\geg$ to $\Ss$ is an isomorphism when restricted to the $\Gs$-invariant functions, i.e. 
    $$\C[\Ss] \cong \C[\geg]^\Gs.$$
    Note that by Chevalley's restriction theorem \cite[Theorem 3.1.38]{ChGi} the right-hand side also restricts isomorphically to $\C[\ttt]^\Ws$.
\end{enumerate}
\end{proposition}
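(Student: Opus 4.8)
The plan is to establish (1)--(3) from the $\ssl_2$-representation theory attached to the pairing $\phi$, and (4)--(5) from a transversality-plus-homogeneity argument in the style of Kostant, with Chevalley restriction supplying the final isomorphism. Write $h:=d\phi(h)$ and let $\geg=\bigoplus_{i\in\Z}\geg_i$ be the decomposition into $\ad(h)$-eigenspaces; the eigenvalues are integers because $H^t$ integrates $h$, and the relation $[h,e]=2e$ puts $e\in\geg_2$. When $\Gs$ is reductive, $(e,f,h)$ with $f:=d\phi(f)$ is an $\ssl_2$-triple; regarding $\geg$ as a finite-dimensional $\ssl_2$-module and counting weights on each irreducible summand yields the classical decomposition $\geg=[\geg,e]\oplus C_\geg(f)$, with $C_\geg(f)$ supported in non-positive degrees. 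Since $e$ is regular, $\dim C_\geg(f)=\dim C_\geg(e)=r$, so $\Ss=e+C_\geg(f)$ is an affine $r$-plane through $e$ whose direction is a complement to the tangent space $[\geg,e]=T_e(\Gs\cdot e)$ of the adjoint orbit. This gives (1) and (3).

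For a general principally paired $\Gs$ there is no element $f$ available, and one instead recalls the explicit construction of \cite[2.6.3]{HR}: because $[\geg,e]=\bigoplus_i\ad(e)\geg_{i-2}$ is a graded subspace, one chooses degree by degree a graded linear complement $\V=\bigoplus_i\V_i$ to $[\geg,e]$, arranged to lie in non-positive degrees, and sets $\Ss=e+\V$; regularity of $e$ gives $\dim\V=\dim\geg-\dim[\geg,e]=r$, proving (1) in general. When $\Gs$ is solvable, $e$ lies in the nilradical $\nen$ (the pairing $\phi$ carries the unipotent radical of $\Bs_2$ into that of $\Gs$), so $[\geg,e]\subseteq\nen$, while $\dim[\geg,e]=\dim\geg-r=\dim\nen$; hence $[\geg,e]=\nen$, and since $\geg=\ttt\oplus\nen$ one is forced to take $\V=\ttt$, i.e. $\Ss=e+\ttt$. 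This is (2).

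Parts (4) and (5) are the substantive point. Let $\chi\colon\geg\to\Spec\C[\geg]^\Gs$ be the adjoint quotient morphism. Transversality of $\Ss$ to $\Gs\cdot e$ says precisely that $d\chi$ is an isomorphism at $e$. The homogeneity input is the $\Cs$-action on $\Ss$ given by $t\cdot(e+v)=e+\Ad(H^t)(t^{-2}v)=\Ad(H^t)\bigl(t^{-2}(e+v)\bigr)$, which scales $\V_i$ by $t^{i-2}$ --- strictly contracting to $e$ as $t\to\infty$, since $i\le 0$ --- and which $\chi$ intertwines with the natural contracting $\Cs$-action on $\Spec\C[\geg]^\Gs$ coming from the grading of $\C[\geg]^\Gs$ by polynomial degree. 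By the usual homogeneity argument, an equivariant morphism that is étale at the (unique) fixed point is étale everywhere (the étale locus is open, $\Cs$-stable, and meets every orbit closure) and is then an isomorphism onto $\Spec\C[\geg]^\Gs$; this proves (5), the restriction $\C[\geg]^\Gs\to\C[\Ss]$ being the induced pullback, and the remaining isomorphism $\C[\geg]^\Gs\cong\C[\ttt]^\Ws$ being Chevalley's theorem \cite[Theorem 3.1.38]{ChGi}. For (4): étaleness of $\chi|_\Ss$ along $\Ss$ forces $\Ss\subseteq\geg^{\reg}$ (the smooth locus of $\chi$), and since $\chi$ separates regular adjoint orbits, the bijectivity of $\chi|_\Ss$ shows that each regular orbit meets $\Ss$ in exactly one point. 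In the reductive case all of this is Kostant's theorem \cite{Kostsec}; in the principally paired generality it is carried out in \cite{HR}, which I would cite there.

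The main obstacle is the non-reductive case. Without an $\ssl_2$-triple there is no slice theorem to invoke, so two things must be done by hand along the inductive construction of \cite{HR}: first, that a graded complement $\V$ to $[\geg,e]$ can be chosen in non-positive degrees --- which is what makes the $\Cs$-action genuinely contracting and is where regularity of $e$ is really used --- and second, that $\chi|_\Ss$ is an isomorphism onto $\Spec\C[\geg]^\Gs$ even though $\C[\geg]^\Gs$ need not be a polynomial ring and the naive analogue of Chevalley's theorem can fail, so one must argue finiteness and the one-point central fibre directly. Once the contracting $\Cs$-action and transversality at $e$ are in place, the homogeneity argument runs uniformly and the rest is bookkeeping.
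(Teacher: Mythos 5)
Your proposal is correct and is essentially the argument the paper relies on: the paper offers no proof of its own beyond citing \cite[Section 2.6]{HR}, and your sketch --- the $\ssl_2$-weight decomposition $\geg=[\geg,e]\oplus C_\geg(f)$ for (1)--(3) and the transversality-plus-contracting-$\Cs$-action argument for (4)--(5), with the non-reductive technicalities deferred to \cite{HR} and the reductive case to \cite{Kostsec} --- is precisely the content of that reference. The only step stated a little too quickly is the claim that \'etaleness of the equivariant map $\chi|_\Ss$ everywhere already yields an isomorphism onto $\Spec\C[\geg]^\Gs$; one still needs the graded/contracting structure (or finiteness together with the one-point central fibre) to upgrade \'etale to iso, but this is exactly the bookkeeping you acknowledge and that \cite{HR} carries out.
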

\begin{proof}
    See Section 2.6 in \cite{HR}.
\end{proof}
\noindent
Notice that $\C[\ttt]^\Ws \cong H^*_\Gs(\pt)$, and hence $\Gs$-equivariant cohomology is always a module over $\C[\ttt]^\Ws \cong \C[\Ss]$.

Whenever an algebraic group $\Gs$ acts on a smooth variety $X$, it yields an infinitesimal action of the Lie algebra $\geg$ of $X$. Formally, the action map $\Gs\times X\to X$ yields the map $T\Gs\times TX\to TX$ of the total spaces of the tangent bundles. Restricting to $T_1\Gs\times X \simeq \geg\times X$ gives the Lie algebra action morphism $\geg\times X \to TX$.

Extending this by the identity on $\geg$ yields $\geg\times X\to \geg\times TX$, which is a section of the vertical tangent bundle on $\geg\times X$, i.e. the tangent bundle $\geg\times TX$ in the direction of $X$. We call this vector field $V$ the \emph{total vector field} of the action\footnote{The author would like to thank Jakub L\"{o}wit for suggesting this slick definition. The definition in \cite{HR} is technically more involved.}. For any $y\in\geg$ this restricts to a vector field on $X$, which we denote by $V_y$. We also consider the restriction $V_\Ss$ of $V$ to $\Ss\times X$.

We then construct two zero schemes: $\ZZ_\tot\subset \geg\times X$ is the zero scheme of $V$, and $\ZZ\subset \Ss\times X$ is the zero scheme of $V_\Ss$. Equivalently, $\ZZ$ is the pullback of $\ZZ_\tot$ via the map $\Ss\times X\to \geg\times X$. We also denote by $\ZZ_{\tot}^{\red}\subset \ZZ_{\tot}$ the closed subscheme of $\ZZ_{\tot}$ obtained by reduction. Explicitly, the scheme $\ZZ_\tot$ can be defined in two ways. First, we can describe its ideal sheaf in $\geg\times X$ as the image of the contraction of vertical 1-forms with $V$. Alternatively, it fits into the following diagram as the scheme-theoretic intersection of $V$ and the zero section.

$$ \begin{tikzcd}
 \ZZ_\tot  \arrow{d}{} \arrow{r}{}  &
   X \arrow{d}{0} \\
   \geg\times X \arrow{r}{V}&
  TX
\end{tikzcd}$$

There is a natural $\Cs$-action on $\ZZ_\tot$, which simply comes from scaling the $\geg$ component. For computational reasons we consider the action of the weight $-2$, i.e. $t . v  = t^{-2}v$.

The action on $\ZZ_\tot$ does not directly restrict to an action on $\ZZ$, as scaling does not preserve $\Ss$. Indeed, $\Ss$ is only an affine space in $\geg$, not a linear subspace. However, if $v\in \geg$ is regular, then for any $t\in\Cs$ the element $t^{-2}v$ is also regular, hence it must be conjugate to an element of $\Ss$. Specifically, for $v\in\Ss$ we have $\Ad_{H^t}(t^{-2}v)\in \Ss$. Hence we can define the $\Cs$-action on $\Ss$ by
\begin{equation}
\label{defact}
    t.v = t^{-2}\Ad_{H^t}(v).
\end{equation}
With that action, the isomorphism
$$\C[\Ss]\simeq H^*_\Gs(\pt)$$
is a graded isomorphism. The grading on the right is the standard cohomological grading, and on the left-hand side it comes from the weights of pullback by the action of $\Cs$. For the details, see \cite[2.6.3]{HR}. The action \ref{defact} naturally defines a $\Cs$ action on $\Ss\times X$ by
$$t.(v,x) = \left( t^{-2}\Ad_{H^t}(v), H^t x\right). $$
Note that $V_{ t^{-2}\Ad_{H^t}(v)}(H^t x) =t^{-2}H^t_*V_v(x) $. Hence the $\Cs$-action defined on $\Ss\times X$ satisfies
$$t_*V = t^2V$$
for any $t\in\Cs$. Thus $\ZZ$ is $\Cs$-invariant, as the zero scheme of $V$. The pullback of global functions via the action makes $\C[\ZZ]$ into a $\Cs$-representation, and hence a ring graded by the weights of $\Cs$:
$$\C[\ZZ] = \bigoplus_{k\in\Z} \C[\ZZ]_k.$$

If a principally paired group $\Gs$ acts on an algebraic variety $X$, we say that the action is \emph{regular} if the principal nilpotent $e\in\geg$ has a single zero on $X$. On one hand, this is equivalent to a seemingly weaker condition of $e$ having isolated zeros. On the other hand, it is equivalent to a seemingly stronger condition of all regular elements of $\geg$ having isolated zeros \cite[Lemma 2.46]{HR}.

The following theorem is then proved in \cite[Theorems 3.5, 4.1, 5.7]{HR}.

\begin{theorem}\label{thmhr}
Assume that a principally paired group $\Gs$ acts regularly on a smooth projective variety $X$. Then there are two isomorphisms of graded $\C[\ttt]^\Ws$-algebras:
$$H_\Gs^*(X) \to H^0(\ZZ_{\tot}^{\red},\OO_{\ZZ_{\tot}^{\red}})^\Gs \to H^0(\ZZ,\OO_\ZZ)$$
between the $\Gs$-equivariant cohomology of $X$, the ring of $\Gs$-invariant functions on $\ZZ_\tot^{\red}$, and the ring of all regular functions on $\ZZ$. The latter isomorphism comes from the restriction.
Moreover, the zero scheme $\ZZ$ is reduced and affine, so that we have the following diagram with the vertical maps being isomorphisms:
$$ \begin{tikzcd}
 \ZZ  \arrow{d}{\pi} \arrow[r,"\rho^*", "\cong"']  &
  \Spec H_\Gs^*(X;\C) \arrow{d} \\
   \Ss \arrow{r}{\cong}&
  \Spec H^*_\Gs.
\end{tikzcd}$$
\end{theorem}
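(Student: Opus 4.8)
The plan is to construct the comparison map over the principal nilpotent $e\in\Ss$ first and then spread it out over all of $\Ss$ by a flatness argument, using the properties of the Kostant section collected in Proposition~\ref{kostprop} throughout. \emph{Step 1 (the fibre over $e$).} For regular $v\in\geg$, regularity of the action together with \cite[Lemma~2.46]{HR} makes the zero scheme $Z_v$ of $V_v$ finite, so $V_v$ is a regular section of $TX$ and the Koszul complex $(\Omega^\bullet_X,\iota_{V_v})$ resolves $\OO_{Z_v}$. Comparing the two hypercohomology spectral sequences of this complex — one abutting to $\C[Z_v]$, the other, after invoking the Hodge decomposition of $X$, to $H^*(X)$, as in Carrell--Lieberman \cite{CL} — yields $H^{\mathrm{odd}}(X)=0$ and $\dim_\C\C[Z_v]=\dim_\C H^*(X)=\chi(X)$. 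For $v=e$ the cocharacter $\{H^t\}$ rescales $V_e$ homogeneously, so the spectral sequences are $\Cs$-equivariant, the filtrations split, and one gets the graded algebra isomorphism $H^*(X)\xrightarrow{\ \sim\ }\C[Z_e]$ of Akyildiz--Carrell \cite{AC}.

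\emph{Step 2 (the family over $\Ss$).} I would then show $\pi\colon\ZZ\to\Ss$ is finite: it is proper because $X$ is, and quasi-finite by regularity, so in particular $\ZZ$ is affine since $\Ss$ is. As the zero scheme of a section of $p^*TX$ on the smooth variety $\Ss\times X$, every component of $\ZZ$ has dimension $\geq r=\dim\Ss$, while finiteness over $\Ss$ bounds $\dim\ZZ\leq r$; hence $\ZZ$ is a complete intersection of the expected codimension, in particular Cohen--Macaulay and equidimensional of dimension $r$. Miracle flatness (a finite morphism of constant fibre dimension from a Cohen--Macaulay scheme to a regular one is flat) then makes $\pi$ flat, so $\C[\ZZ]=\pi_*\OO_\ZZ$ is a free $\C[\Ss]\cong\C[\ttt]^\Ws$-module whose rank, the length of a generic regular semisimple fibre $X^\Ts$, equals $\dim H^*(X)=\chi(X)$ by Borel localization, and whose fibre over $e$ is $\C[Z_e]$. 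The same complete-intersection description shows $\ZZ$ is generically reduced — a regular semisimple fibre is the smooth fixed locus of a subtorus — and hence, being Cohen--Macaulay, reduced.

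\emph{Step 3 (the map and the conclusion).} To produce $\rho^*\colon H^*_\Gs(X)\to\C[\ZZ]$ I would work in the Cartan model $(\C[\geg]\otimes\Omega^\bullet(X))^\Gs$ computing $H^*_\Gs(X)$, whose differential is $d-\iota_V$ for the total vector field $V$: restriction of an equivariant cocycle to $\ZZ_\tot$ kills the $\iota_V$-term, and a chain-level comparison with the Koszul resolution of $\OO_{\ZZ_\tot}$ (Step~1 made global) produces an algebra homomorphism $H^*_\Gs(X)\to H^0(\ZZ_\tot^{\red},\OO)^\Gs$, which restriction to $\Ss\times X$ turns into $\rho^*$. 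This map is $\C[\ttt]^\Ws$-linear because on $H^*_\Gs(\pt)$ it is the identification of Proposition~\ref{kostprop}(5) followed by $\pi^*$, and graded because both gradings are read off the $\{H^t\}$-action. Since $H^{\mathrm{odd}}(X)=0$, $X$ is equivariantly formal, so $H^*_\Gs(X)$ is likewise a free $\C[\ttt]^\Ws$-module of rank $\chi(X)$; reducing $\rho^*$ modulo the graded maximal ideal — whose zero locus on $\Ss$ is the $\Cs$-fixed point $e$ — recovers the isomorphism of Step~1, and graded Nakayama promotes this to an isomorphism of free modules of equal finite rank. Finally Proposition~\ref{kostprop}(4) (every regular orbit meets $\Ss$ in exactly one point) makes the restriction $H^0(\ZZ_\tot^{\red},\OO)^\Gs\to\C[\ZZ]$ bijective, the remaining proper components of $\ZZ_\tot^{\red}$, such as $\{0\}\times X$, contributing only constants to the $\Gs$-invariants.

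The technical heart is Step~1: the degeneration of the Carrell--Lieberman spectral sequence, so that $\dim\C[Z_e]=\dim H^*(X)$, and the splitting of the resulting filtration by $\Cs$-homogeneity. Once that and the Kostant-section package of Proposition~\ref{kostprop} are in hand, the finiteness and flatness of $\pi$ and the Nakayama step are comparatively formal.
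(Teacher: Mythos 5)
The paper does not reprove Theorem \ref{thmhr}: it imports it from \cite[Theorems 3.5, 4.1, 5.7]{HR} and only sketches the key non-equivariant input, Theorem \ref{clthm}. Your Step~1 is exactly that input. From there your route genuinely diverges from the cited proof. Where \cite{HR} (in the solvable case) defines $\rho$ pointwise on closed points of $\ZZ$ by localisation to torus-fixed points as in \eqref{defrho}, proves injectivity via the localisation theorem, obtains surjectivity from a Poincar\'{e}-series count against Theorem \ref{clthm}, and then descends to general $\Gs$ by realising the Weyl-group action geometrically, you define the map once and for all via the Cartan model, show $\pi\colon\ZZ\to\Ss$ is finite flat with $\C[\ZZ]$ free of rank $\chi(X)$, and conclude by graded Nakayama from the fibre over $e$. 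Your Step~2 is correct and self-contained: the complete-intersection bound plus quasi-finiteness gives Cohen--Macaulayness, miracle flatness applies, equidimensionality forces every component of $\ZZ$ to dominate $\Ss$ and hence meet the regular semisimple locus, and $(R_0)+(S_1)$ gives reducedness. If it worked in full, this route would buy you the general principally paired case without the Weyl-descent step and would give injectivity and surjectivity simultaneously.

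There are, however, two genuine gaps in Step~3. First, the Cartan model is built from smooth forms, so restricting the form-degree-zero component of an equivariant cocycle to $\ZZ_\tot^{\red}$ produces a priori only a smooth invariant function; nothing in your argument shows it is a \emph{regular} function, which is the substance of the theorem. The repair essentially forces you back to the paper's construction: since the action is regular, $H^*_\Gs(X)$ is generated by equivariant Chern classes of $\Gs$-linearised bundles \cite[Lemma 4.14]{HR}, and on such a class the Chern--Weil representative restricts to the manifestly algebraic function $(v,x)\mapsto\Tr_{\Lambda^k\Ee_x}(\Lambda^k v_x)$; but then the map is defined on generators and well-definedness must be checked, which is what the localisation formula \eqref{defrho} accomplishes in \cite{HR}. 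Second, your Nakayama step needs the reduction of $\rho^*$ modulo the augmentation ideal to be surjective onto $\C[Z_e]$, and ``recovers the isomorphism of Step~1'' is not automatic: the spectral-sequence argument produces an abstract isomorphism through a split filtration, not a formula, so identifying it with your restriction map requires the Chern-class form of Akyildiz--Carrell, namely that the trace functions $x\mapsto\Tr_{\Lambda^k\Ee_x}(\Lambda^k e_x)$ generate $\C[Z_e]$. Finally, the assertion that $H^0(\ZZ_\tot^{\red},\OO_{\ZZ_\tot^{\red}})^\Gs\to\C[\ZZ]$ is bijective because the components over the non-regular locus ``contribute only constants'' is not an argument: injectivity needs density of the $\Gs$-saturation of $\ZZ$ in $\ZZ_\tot^{\red}$, and surjectivity needs that every function on $\ZZ$ extends $\Gs$-invariantly across the non-regular locus; this is precisely the content of \cite[Theorem 5.7]{HR} and cannot be dismissed in a parenthesis.
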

\begin{remark}
 It is not known to the author whether $\ZZ_\tot$ itself has to be reduced under the conditions of the theorem. We address this issue again in Sections \ref{secnon} and \ref{seck}.
\end{remark}

At the heart of \cite{HR} lie the non-equivariant results of Carrell--Lieberman and Akyildiz--Carrell \cite{AC, CL}. In the generality we need, \cite[Theorem 1.1]{AC} says the following.

\begin{theorem}\label{clthm}
 Assume that the Borel subgroup $\Bs_2$ of upper-triangular matrices in $\SL_2$ acts on a smooth projective variety $X$ regularly, i.e. with a single zero of $e\in\ssl_2$. Let $V$ be the vector field defined by $e$ on $X$. Then the action of the diagonal torus $\Cs\subset \SL_2$ preserves the zero scheme $Z$ of $V$ and all the weights of its action on $\C[Z]$ are even. This gives a grading $\C[Z] = \bigoplus_{i=0}^n \C[Z]_{2i}$, where $\C[Z]_{2i}$ is the part of $\C[Z]$ of weight $2i$. Then
 $$H^*(X,\C) \simeq \C[Z]$$
 as graded rings.
\end{theorem}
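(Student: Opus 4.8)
The plan is to derive the statement from the non-equivariant Carrell--Lieberman theorem quoted at the start of the introduction, by checking that the diagonal torus $\Cs\subset\Bs_2$ splits the filtration that theorem produces. First I would note that regularity makes the zero set of $e$ a single point, hence isolated and nonempty, so Carrell--Lieberman applies to $V=V_e$: it equips the coordinate ring of the (finite, hence affine) zero scheme $Z$ with an increasing multiplicative filtration $0=F_{-1}\subset F_0\subset\cdots\subset F_n=\C[Z]$ such that $\Gr^j_F\C[Z]:=F_j/F_{j-1}\cong H^{2j}(X,\C)$ as graded rings and $H^{\mathrm{odd}}(X)=0$. I would use that this is precisely the filtration that the naive filtration of the Koszul resolution $\mathcal K^\bullet=\bigl(\Omega^n_X\xrightarrow{\iota_V}\cdots\xrightarrow{\iota_V}\Omega^1_X\xrightarrow{\iota_V}\OO_X\bigr)$ of $\OO_Z$ (a resolution because $V$, having isolated zeros, is a regular section of $TX$) induces on $\C[Z]=\mathbb H^0(X,\mathcal K^\bullet)$, the higher hypercohomology vanishing since $Z$ is finite. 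Concretely, $F_j$ is the image in $\C[Z]$ of the hypercohomology of the stupid truncation $(\Omega^j_X\to\cdots\to\OO_X)$, and $\Gr^j_F\C[Z]$ is a subquotient of the term $H^j(X,\Omega^j_X)$.

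Next I would feed in the torus. Since $\Ad_{H^t}(e)=t^2e$, the field $V$ is a $\Cs$-eigenvector for the action $t\cdot x=H^tx$; in particular $Z$ is $\Cs$-stable and each contraction $\iota_V\colon\Omega^p_X\to\Omega^{p-1}_X$ is homogeneous of a fixed nonzero even weight for the natural $\Cs$-linearisations of the sheaves $\Omega^p_X$. Hence $\mathcal K^\bullet$ carries a $\Cs$-equivariant structure, the whole hypercohomology computation is $\Cs$-equivariant, and the filtration $F_\bullet$ consists of graded subspaces of the graded ring $\C[Z]=\bigoplus_w\C[Z]_w$.

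The heart of the matter is identifying the weights. Rescale the $\Cs$-linearisations so that $\OO_X$ has weight $0$ and every $\iota_V$ has weight $0$; this forces $\Omega^p_X$ to be linearised with a twist by a weight proportional to $p$, so the term $H^p(X,\Omega^p_X)$ feeding $\Gr^p$ becomes the untwisted cohomology tensored with that fixed twist. Now $\Cs$ is connected, so it acts trivially on $H^*(X;\C)$ (homotopy invariance of singular cohomology) and preserves the Hodge decomposition (the Hodge filtration being canonical), and hence acts trivially on $H^p(X,\Omega^p_X)$; therefore $\Gr^j_F\C[Z]$ is pure of weight $2j$ (with the conventions of the theorem the twist comes out positive), while $F_j$, assembled from $\Omega^0_X,\dots,\Omega^j_X$, has weights in $\{0,2,\dots,2j\}$. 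Combining these, induction on $j$ gives $F_j=\bigoplus_{k\le j}\C[Z]_{2k}$ with $\C[Z]_{2j}\cong\Gr^j_F\C[Z]\cong H^{2j}(X)$; in particular all weights of $\Cs$ on $\C[Z]$ are even and lie in $\{0,2,\dots,2n\}$. Finally, a multiplicative filtration split by a compatible grading is canonically isomorphic, as a graded ring, to its own associated graded, which yields $\C[Z]=\bigoplus_j\C[Z]_{2j}\xrightarrow{\ \sim\ }\Gr_F\C[Z]\cong H^*(X,\C)$ with the $\Cs$-action as stated.

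Granting Carrell--Lieberman, the one step that needs real care is the weight bookkeeping above: one must unwind the construction of $F_\bullet$ precisely enough to see that its $j$-th graded piece lives in weight exactly $2j$, and the single genuinely non-formal input there is that a connected group acts trivially on the Hodge pieces $H^q(X,\Omega^p_X)$. If instead one wished to prove Carrell--Lieberman from scratch, the difficulty would move to that theorem: the isomorphism $\Gr_F\C[Z]\cong H^*(X,\C)$, together with $H^{\mathrm{odd}}(X)=0$, relies on smooth projectivity through Hodge theory in an essential way (it is not merely an Euler-characteristic count), and is extracted by playing the $\iota_V$-spectral sequence of $(\Omega^\bullet_X,\iota_V)$ off against the degenerate Hodge--de Rham spectral sequence.
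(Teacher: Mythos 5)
Your proposal is correct and follows essentially the same route as the paper's sketch: the Koszul resolution of $\OO_Z$ and its hypercohomology spectral sequence produce the Carrell--Lieberman filtration with $\Gr^j_F\C[Z]\cong H^j(X,\Omega^j)$, the relation $t_*V=t^2V$ (equivalently $\Ad_{H^t}(e)=t^2e$) lifts the $\Cs$-action to the complex via a twist by $t^{2p}$ on $\Omega^p_X$, and connectedness of $\Cs$ forces the pullback to act trivially on $H^p(X,\Omega^p)$, so the filtration is split by the weight grading. Your identification of the one non-formal input (trivial action of a connected group on the Hodge pieces) matches the paper's argument exactly.
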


Therefore $Z \simeq \Spec H^*(X,\C)$ and the cohomological grading is recovered geometrically by the weights of the $\Cs$-action. To arrive at Theorem \ref{thmhr}, one first proves that in case $\Gs$ is solvable, $\ZZ$ is a reduced affine scheme, and defines a map $H^*_\Gs(X,\C)\to \C[\ZZ]$ by defining the values on the closed points of $\ZZ$. The injectivity of localisation shows that the map is injective, and Theorem \ref{clthm} provides a comparison of the Poincar\'{e} series. Then, in the case of a general group $\Gs$, one realises the Weyl group action on $H^*_\Gs(X,\C)$ as a geometric action on the zero scheme defined for the Borel subgroup, to get the result for arbitrary principally paired $\Gs$.

In the next section we prove that the isomorphism
$$H_\Gs^*(X) \to  H^0(\ZZ,\OO_\ZZ)$$
holds with assumptions weaker than regularity. In that case we will not be able to use the results of \cite{AC,CL} directly, as the assumptions do not hold. Instead, we adjust their proofs to fit our needs. To make the analogy clear, we provide a quick sketch of the proof of the above theorem.

\begin{proof}[Sketch of proof of Theorem \ref{clthm}]
 The vector field $V$ defines a section of the tangent bundle $TX$ of $X$. Consider the Koszul complex defined by that section:
 $$ 0 \to \Omega^n_X \xrightarrow{\iota_V} \Omega^{n-1}_X  \xrightarrow{\iota_V} \dots  \xrightarrow{\iota_V} \Omega^{1}_X  \xrightarrow{\iota_V} \OO_X \to 0. $$
 By the assumption the codimension of $Z$ in $X$ is equal to the rank of $TX$, hence the Koszul complex is a resolution of $\OO_Z$ and thus we obtain the spectral sequence
 $$
	E_1^{pq} = H^q(X,\Omega^{-p})
 $$
convergent to $H^{p+q}(X,\OO_Z)$. 

Using the Lefschetz operator, one then proves that the spectral sequence degenerates on the first page. As $\dim Z = 0$, we have 
$$H^i(X,\OO_Z) =
\begin{cases}
\C[Z] \text{ for } i=0; \\
0 \text{ otherwise.}
\end{cases}
$$
Therefore the convergence means that there is an increasing filtration $F_i$ on the ring $\C[Z]$ such that $F_i/F_{i-1} = H^i(X,\Omega^i)$, and moreover $H^q(X,\Omega^p) = 0$ for $p\neq q$. 

Now we need to see that the filtration is associated with the grading given by the weights of the $\Cs$-action. First, for $t\in \Cs$ let $t_p:\Omega^p_X\to \Omega^p_X$ be the pullback of forms along the map $t:X\to X$. We then construct an action of $\Cs$ on the Koszul resolution via the following.

$$
\begin{tikzcd}
0\arrow[r] &
\Omega^n_X \arrow[rr, "\iota_V"] \arrow[d, "t^{2n} t^{-1}_n"]&&
\Omega^{n-1}_X \arrow[rr, "\iota_V"] \arrow[d, "t^{2(n-1)} t^{-1}_{n-1}"] &&
\dots \arrow[r, "\iota_V"] \arrow[d]&
\Omega^{1}_X \arrow[rr, "\iota_V"] \arrow[d, "t^{2} t^{-1}_1"]  &&
\OO_X \arrow[r] \arrow[d,"(t^{-1})^*"] & 0\\
0\arrow[r] &
\Omega^n_X \arrow[rr, "\iota_V"] &&
\Omega^{n-1}_X \arrow[rr, "\iota_V"] &&
\dots \arrow[r, "\iota_V"] &
\Omega^{1}_X \arrow[rr, "\iota_V"] &&
\OO_X \arrow[r] & 0
\end{tikzcd}
$$ 
The vertical maps commute with the horizontal ones due to the equality
$$t_* V = t^2 V.$$
Hence we get a group of automorphisms of the Koszul complex, parametrised by $\Cs$. The spectral sequences for hypercohomology are functorial and thus the sequence 
$$
	E_1^{pq} = H^q(X,\Omega^{-p}) \Rightarrow H^{p+q}(X,\OO_Z)
$$
is $\Cs$-equivariant. The $\Cs$-action on $\OO_X$, the zeroth term of the Koszul complex, is defined by the geometric action of $\Cs$ on $X$. Hence on the right-hand side we see the action of $\Cs$ on $\OO_Z$ which descends from the action on $X$. We need to determine the action on the left-hand side. By definition, $t$ acts on $\Omega^p$ by $t^{2p} t^{-1}_p$. The map $t^{-1}_p$ is the pullback of $p$-forms via the action of $\Cs$ on $X$. As $\Cs$ is connected, for any $t$ the corresponding map $t^{-1}$ is homotopic to the identity, and hence descends to the identity on the level of $H^q(X,\Omega^p)$. Therefore the action on the left-hand side is simply the multiplication by $t^{2p}$. Hence the $\Cs$-invariant filtration $F_\bullet$ on $\C[Z]$ satisfies the property that the action of $\Cs$ on $F_i/F_{i-1}$ is of pure weight $2i$. But there is only one such filtration, and it is the one defined by the grading by weights.

For details, see \cite[Main Theorem and Remark 2.7]{CL} and \cite[Theorem 1.1]{AC}, or \cite[Section 4]{Rych}.
\end{proof}

\subsection{Zero schemes and equivariant cohomology for non-regular actions}
\label{secnon}
Theorem \ref{thmhr} surprisingly connects two types of commutative rings arising in geometry -- cohomology rings of topological spaces, and rings of functions on algebraic varieties. It is however restricted by its assumptions. The regularity condition is satisfied e.g. for flag varieties, smooth Schubert varieties and Bott--Samelson resolutions. On the other hand, many interesting varieties with a group action do not satisfy the assumption. Even for $\Gs$-spaces with isolated torus-fixed points, the regular nilpotent might have the zero scheme of positive dimension -- as e.g. for the actions of reductive groups on wonderful compactifications. Moreover, if $\Gs = \Ts$ is a torus, the regular nilpotent is equal to $0$, hence in non-trivial cases its zero scheme is also non-trivial.

The aim of this section is to fix that gap and prove the following theorem. The assumption will cover many interesting examples, such as spherical varieties, as one can see in Example \ref{exsph}. This will show the isomorphism of the ring of functions on the zero scheme, and the equivariant cohomology of the given variety. However, in this more general setup one loses the affineness of $\ZZ$.

\begin{theorem}\label{nonreg}
Assume that a principally paired group $\Gs$ acts on a smooth projective variety $X$. Let $\Ss \subset \geg$ be the Kostant section. Let $\ZZ \subset \Ss\times X$ be the zero scheme of the vector field $V_\Ss$. Assume that $\dim \ZZ = \dim \Ss$. Then there is a natural isomorphism
$$\rho: H^*_\Gs(X) \to \C[\ZZ] $$
of graded $\C[\Ss]\simeq H^*_\Gs(\pt)$-algebras. In other words, the following diagram commutes.
$$
\begin{tikzcd}
H^*_\Gs(X) \arrow[r, "\rho", "\cong"'] 
& \C[\ZZ]
\\ \\
H^*_\Gs(\pt) \arrow[r, "\simeq"]\arrow[uu]
& \C[\Ss] \arrow[uu]
\end{tikzcd}
$$
The grading on $\C[\ZZ]$ is defined by the weight spaces of the $\Cs$-representation thereon. It comes from the pullback of functions along the $\Cs$-action on $\ZZ$ given by
$$t\cdot (v,x) = \left(\frac{1}{t^2}\Ad_{H^t}(v), H^t x\right).$$
Moreover, $H^i(\ZZ,\OO_\ZZ) = 0$ for any $i>0$. 

The isomorphism $\rho$ is natural with respect to morphisms of admissible $\Gs$-varieties, as well as for the group homomorphisms which preserve the chosen $\ssl_2$-pairs.
\end{theorem}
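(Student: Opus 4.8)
The plan is to mimic the strategy behind Theorem \ref{thmhr}, but replacing the use of the Carrell–Lieberman/Akyildiz–Carrell degeneration (Theorem \ref{clthm}) — which required the zero scheme to be $0$-dimensional — with a relative version over the Kostant section $\Ss$, exploiting only the hypothesis $\dim\ZZ = \dim\Ss = r$. Concretely, I would first reduce to the solvable case $\Gs = \Bs_2$ (or more generally $\Gs$ solvable with $\Ss = e+\ttt$), and then transfer to arbitrary principally paired $\Gs$ by the Weyl-group descent argument of \cite[Section 5]{HR}, which is purely formal once the solvable case and the grading are in place. So the heart of the matter is: given $\Bs_2$ acting on smooth projective $X$ with $V$ the total vector field on $\Ss\times X \cong \A^1\times X$ (coordinate $s$ on $\ttt \cong \A^1$, so $V_s = V_e + s V_h$), and with $\ZZ = Z(V_\Ss)$ of dimension $1$, produce a graded isomorphism $H^*_{\Cs}(X) \cong \C[\ZZ]$ over $\C[s] = H^*_{\Cs}(\pt)$, together with the vanishing of higher cohomology of $\OO_\ZZ$.

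The key steps I would carry out are as follows. First, form the relative Koszul complex on $\Ss\times X$ defined by $V_\Ss$, i.e. $0\to \pi_X^*\Omega^n_X \xrightarrow{\iota_{V_\Ss}} \cdots \xrightarrow{\iota_{V_\Ss}} \OO_{\Ss\times X}\to 0$, where $n=\dim X$ and $\pi_X:\Ss\times X\to X$. Since $\dim(\Ss\times X) = n+1$ and $\dim\ZZ = 1$, the codimension of $\ZZ$ in $\Ss\times X$ equals $n$, the length of the complex, so the Koszul complex is again a resolution of $\OO_\ZZ$ (this is where the dimension hypothesis enters, exactly as in the sketch of Theorem \ref{clthm}). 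Pushing forward to $\Ss = \Spec\C[s]$ and using $R\pi_*\pi_X^*\Omega^p_X \cong \C[s]\otimes_{\C} R\Gamma(X,\Omega^p_X)$ (flat base change, since $\Ss$ is affine over a point), I obtain a spectral sequence $E_1^{pq} = \C[s]\otimes H^q(X,\Omega^{-p}_X) \Rightarrow H^{p+q}(\Ss\times X,\OO_\ZZ)$; by affineness of $\Ss$ and properness of $X$, the abutment in degree $k$ is $H^k(\ZZ,\OO_\ZZ)$. Second — the crucial degeneration step — I would equip this whole complex with the $\Cs$-action coming from $t\cdot(v,x) = (t^{-2}\Ad_{H^t}v, H^t x)$, weighted on $\Omega^p$ by $t^{2p}t^{-1}_p$ as in the sketch, using $t_*V_\Ss = t^2 V_\Ss$ to check commutativity with $\iota_{V_\Ss}$; since the $\Cs$-pullback on $X$ is homotopic to the identity, $\Cs$ acts on $E_1^{pq} = \C[s]\otimes H^q(X,\Omega^p_X)$ with weight $2p$ on the $H^q(X,\Omega^p)$ factor (and some fixed weight, namely $-2$ scaled by Euler-vector-field action, on $s$). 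The differentials $d_r$ shift the $\Cs$-weight in a way incompatible with survival past $E_1$ when restricted to the graded pieces — but here one must be careful: unlike in Theorem \ref{clthm}, the terms are $\C[s]$-modules, not finite-dimensional, so "weight reasons" alone do not immediately kill $d_r$. The right argument is to use the hard Lefschetz $\ssl_2$ (generated by cup product with a $\Cs$-invariant Kähler class, the raising operator, and a lowering operator), exactly as Carrell–Lieberman do, to force $H^q(X,\Omega^p)=0$ for $p\ne q$ and the spectral sequence to degenerate at $E_1$; this gives simultaneously that $H^k(\ZZ,\OO_\ZZ)=0$ for $k>0$ (since only the $p=q$ terms survive and these sit in total degree $\ge 0$, while for $k>0$ one would need $p=q$ with $p+q=k$ odd or... — more precisely the surviving pieces contribute only to $H^0$ because after degeneration $H^k(\ZZ,\OO_\ZZ) = \bigoplus_{p-q = ?}$...), and an exhaustive $\Cs$-stable filtration $F_\bullet$ on $\C[\ZZ] = H^0(\ZZ,\OO_\ZZ)$ with $F_i/F_{i-1} \cong \C[s]\otimes H^i(X,\Omega^i_X)$ carrying $\Cs$-weight $2i$ (shifted by the $s$-grading). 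Third, since $\Cs$ acts on $F_i/F_{i-1}$ with weights all $\equiv 2i$ modulo the even weights coming from $s$, and the filtration is $\Cs$-stable and exhaustive, it must coincide with the filtration by the grading — i.e. $\C[\ZZ]$ is a graded $\C[s]$-algebra with graded pieces matching $\bigoplus_i \C[s]\otimes H^i(X,\Omega^i) $, which as a graded vector space is $H^*_{\Cs}(X)$ by the (equivariantly formal) Hodge decomposition. Fourth, to upgrade the equality of Poincaré series / graded dimensions to a canonical \emph{ring} isomorphism $\rho: H^*_\Gs(X)\to\C[\ZZ]$, I would define $\rho$ geometrically as in \cite{HR}: a class $\alpha\in H^*_\Gs(X)$ restricts, at each closed point $(v,x)\in\ZZ$, to an element of $H^*_{C_\Gs(v)}(\{x\})$ localised appropriately, and Chern–Weil / the splitting principle together with the injectivity of localisation to fixed points shows $\rho$ is a well-defined injective $\C[\Ss]$-algebra map; the dimension count from steps 1–3 then forces surjectivity.

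The main obstacle I anticipate is precisely the degeneration of the relative spectral sequence: in the classical setting one may argue either by the vanishing $H^k(\ZZ,\OO_\ZZ)=0$ for $k\ne 0$ forced by $\dim\ZZ=0$, or by weight reasons on finite-dimensional spaces, and \emph{both} of those shortcuts are unavailable here since $\dim\ZZ = r>0$ and the sheaves $R^q\pi_*$ are $\C[\Ss]$-modules. One must genuinely run the hard-Lefschetz argument of Carrell–Lieberman in the relative/equivariant setting — establishing that the Lefschetz $\ssl_2$ acts compatibly on the (pushed-forward) Koszul spectral sequence and that its representation theory kills all higher differentials and off-diagonal Hodge cohomology — and then separately check that the resulting filtration on $\C[\ZZ]$ is split by the $\Cs$-grading. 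A secondary technical point is the base-change/coherence bookkeeping needed to identify the abutment with $H^*(\ZZ,\OO_\ZZ)$ and to see that $\ZZ\to\Ss$ is (at least) finite in the appropriate sense so that $\C[\ZZ]$ is the global sections; and finally the naturality statement, which should follow formally once $\rho$ is described via restriction to points of $\ZZ$, as in \cite[Theorems 3.5, 4.1]{HR}.
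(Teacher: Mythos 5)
Your overall architecture matches the paper's: reduce to the solvable case, resolve $\OO_\ZZ$ by the relative Koszul complex on $\Ss\times X$ (using $\codim\ZZ=n$), run the $\Cs$-equivariant hypercohomology spectral sequence, define $\rho$ geometrically by localisation to torus-fixed points, prove injectivity, match Poincar\'{e} series, and descend by the Weyl group. However, at the step you yourself flag as the main obstacle there is a genuine gap. You propose to obtain both the degeneration at $E_1$ and the off-diagonal vanishing $H^q(X,\Omega^p)=0$ for $p\neq q$ by ``genuinely running the hard-Lefschetz argument of Carrell--Lieberman in the relative setting.'' Hard Lefschetz cannot deliver the off-diagonal vanishing: it is simply false for a general smooth projective variety (any variety with odd cohomology), and in the original Carrell--Lieberman setting this vanishing is extracted from the fact that the abutment $H^*(X,\OO_Z)$ is concentrated in degree $0$ because $Z$ is finite. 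Here $\dim\ZZ=r>0$, so the concentration of the abutment is itself one of the things to be proved; trying to deduce the Hodge vanishing from degeneration plus concentration, and concentration from the Hodge vanishing, is circular.

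The paper breaks this circle with an elementary observation you missed: $\dim\ZZ=\dim\Ss$ forces the generic fibre of $\ZZ\to\Ss$ to be finite, so a generic regular semisimple element --- and hence the maximal torus --- has only finitely many fixed points on $X$; a smooth projective variety with finitely many torus-fixed points has Tate cohomology, i.e.\ $H^q(X,\Omega^p)=0$ for $p\neq q$ (Lemma \ref{tate} in the paper). With this input the first page of the spectral sequence is concentrated on the diagonal, so degeneration, the vanishing of $H^i(\ZZ,\OO_\ZZ)$ for $i>0$, and the identification of the associated graded of $\C[\ZZ]$ with $\bigoplus_p H^p(X,\Omega^p)\otimes\C[\ttt]$ all follow at once --- no Lefschetz operator is needed. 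The remainder of your argument (the $\Cs$-weights identifying the filtration with the grading, the trace formula $\rho(c_k^\Ts(\Ee))(e+w,x)=\Tr_{\Lambda^k\Ee_x}(\Lambda^k(e+w)_x)$ giving a well-defined injective map into $\C[\ZZ]^{\red}$, and the dimension count forcing reducedness and surjectivity) is essentially the paper's proof. So the proposal is repairable, but as written the central degeneration step rests on a claim that hard Lefschetz does not prove.
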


\begin{remark}
Note first that any regular action satisfies the $\dim \ZZ = \dim \Ss$ condition. Indeed, by \cite[Lemma 2.46]{HR}, in that case all the fibres of the map $\ZZ \to \Ss$ are finite. 
\end{remark}

We split the proof of the theorem into lemmas. First, we treat the case of $\Gs = \Bs$ solvable. In that case, $\Ss = e +\ttt$, where $e$ is the chosen regular nilpotent, and $\ttt = \Lie(\Ts)$ for a maximal torus $\Ts$ containing $h$. We have $H^*_\Bs = H^*_\Ts = \C[\ttt]$ and we denote by $\I$ the augmentation ideal, i.e. the maximal ideal in $H^*_\Ts$ of the polynomials vanishing at $0\in\ttt$.

Notice that the semisimple elements are dense in $\Ss$. Indeed, the regular locus in $\ttt$ is a complement of hyperplanes defined by the characters of the $\Ts$-action on $\geg$. By \cite[Corollary 2.26]{HR}, if $v\in \ttt$ is regular, $e+v$ is semisimple as well -- and conjugate to $v$.

The assumption $\dim \ZZ = \dim \Ss$ implies that the generic fibres of the projection $\ZZ\to \Ss$ are finite, and hence a generic regular semisimple element has finitely many zeros. As the $\Ts$-fixed points are zeros of semisimple elements, there are only finitely many of them. If $X$ is a smooth projective variety with isolated torus fixed points, its cohomology is Tate. Hence we have proved

\begin{lemma} \label{tate}
Assume a solvable principally paired group $\Bs$ acts on a smooth projective variety $X$, and the zero scheme $\ZZ\subset \Ss\times X$ over the Kostant section $\Ss=e+\ttt$ satisfies $\dim \ZZ = \dim \Ss$. Then the cohomology of $X$ is Tate, i.e. $H^q(X,\Omega^p) = 0$ whenever $p\neq q$.
\end{lemma}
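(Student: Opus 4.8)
The plan is to reduce the Hodge--Tate statement to finiteness of the torus-fixed locus $X^\Ts$, and then invoke the classical fact that a smooth projective variety with finitely many torus fixed points has Hodge--Tate cohomology.

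First I would extract finiteness of generic fibres from the dimension hypothesis. Writing $\ZZ = \bigcup_i Z_i$ for the decomposition into irreducible components, each satisfies $\dim Z_i \le \dim \Ss = r$. For the projection $\pi\colon \ZZ \to \Ss \cong \A^r$, a component dominating $\Ss$ has generic fibre of dimension $\dim Z_i - r \le 0$, hence $0$, while the remaining components map into a proper closed subset of $\Ss$. So there is a dense open $U \subseteq \Ss$ over which every fibre of $\pi$ is finite. The elements of $\Ss$ of the form $e + v_0$ with $v_0 \in \ttt$ avoiding the (finitely many) hyperplanes cut out by the weights of $\Ts$ on $\geg$ form a dense open subset as well, so I may choose such a $v := e + v_0 \in U$, with $v_0$ regular in $\ttt$.

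Next I would identify the zero locus of $V_v$ with a torus-fixed locus. By \cite[Corollary 2.26]{HR}, regularity of $v_0$ forces $v = e + v_0$ to be conjugate to $v_0$, say $v = \Ad_g(v_0)$ with $g \in \Bs$; together with the identity $V_{\Ad_g(w)}(x) = g_* V_w(g^{-1}x)$ this shows the zero locus of $V_v$ is the $g$-translate of the zero locus of $V_{v_0}$. Now $V_{v_0}(x) = 0$ exactly when the orbit map $s \mapsto \exp(s v_0)\cdot x$ is constant, i.e. when $x$ is fixed by the Zariski closure $\Ts'$ of $\{\exp(s v_0) : s\in\C\}$, which is a subtorus of $\Ts$; hence the zero locus of $V_{v_0}$ equals $X^{\Ts'} \supseteq X^\Ts$. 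Since $\pi^{-1}(v) = \{v\}\times(\text{zero locus of }V_v)$ is finite by the choice of $v$, the set $X^{\Ts'}$, and therefore $X^\Ts$, is finite.

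Finally, with $X^\Ts$ finite, I would choose a generic one-parameter subgroup $\lambda\colon\Cs\to\Ts$ with $X^\lambda = X^\Ts$ still finite. The Bialynicki--Birula decomposition for $\lambda$ presents $X$ as a finite disjoint union of locally closed affine cells indexed by the fixed points, whose closures furnish a basis of $H^*(X,\C)$ by algebraic cycle classes; hence $H^{2k}(X)$ is spanned by Hodge classes of type $(k,k)$ and $H^{2k+1}(X)=0$. In particular, for the smooth projective $X$ the Hodge decomposition gives $H^q(X,\Omega^p_X) = H^{p,q}(X) = 0$ whenever $p\ne q$. The only non-formal input is this last, classical, statement about varieties with isolated torus fixed points; I expect the write-up to cite it rather than reprove it, so the genuine content is the dimension count of the first step together with the conjugacy fact of \cite{HR}.
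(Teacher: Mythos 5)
Your proposal is correct and follows essentially the same route as the paper: the dimension hypothesis gives finite generic fibres of $\ZZ\to\Ss$, the density of regular semisimple elements $e+v_0$ (conjugate to $v_0$ by \cite[Corollary 2.26]{HR}) identifies such a fibre with a translate of a torus-fixed locus containing $X^\Ts$, and finiteness of $X^\Ts$ then yields Tate cohomology by the classical Bialynicki--Birula argument. You merely spell out the conjugation and zero-locus identification in more detail than the paper does.
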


We now prove a comparison between the Poincar\'{e} series. 

\begin{lemma}
Assume a solvable principally paired group $\Bs$ acts on a smooth projective variety $X$, and the zero scheme $\ZZ\subset \Ss\times X$ satisfies $\dim \ZZ = \dim \Ss$. Then
\begin{enumerate}
    \item $H^i(\ZZ,\OO_\ZZ) = 0$ for $i>0$;
    \item $ P_{H^*_\Ts(X)}(t) = P_{\C[\ZZ]}(t).$
\end{enumerate}
\end{lemma}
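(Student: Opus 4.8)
The plan is to run the Koszul-complex argument from the sketch of Theorem~\ref{clthm}, but on $M:=\Ss\times X$ rather than on $X$ itself, and then to split off the $\C[\Ss]$-factor by flat base change. The total vector field $V_\Ss$ is a section of the rank-$n$ bundle $p_X^*TX$ on $M$, where $n=\dim X$ and $p_X\colon M\to X$ is the (evidently $\Cs$-equivariant) projection. Since $M$ is smooth, hence Cohen--Macaulay, of pure dimension $r+n$, while $\dim\ZZ=\dim\Ss=r$ by hypothesis, every component of $\ZZ=Z(V_\Ss)$ has codimension $\ge n$ in $M$; thus $V_\Ss$ is a regular section and the Koszul complex
$$0\to p_X^*\Omega^n_X\xrightarrow{\iota_{V_\Ss}}\cdots\xrightarrow{\iota_{V_\Ss}}p_X^*\Omega^1_X\xrightarrow{\iota_{V_\Ss}}\OO_M\to 0$$
is a resolution of $\OO_\ZZ$. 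This yields the hypercohomology spectral sequence $E_1^{p,q}=H^q(M,p_X^*\Omega^{-p}_X)\Rightarrow H^{p+q}(\ZZ,\OO_\ZZ)$.

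First I would identify the $E_1$-page. As $\Ss$ is affine and $X$ projective, flat base change along $p_X$ (the fibres are affine, so $R^{>0}p_{X*}p_X^*=0$ and $p_{X*}p_X^*\F=\F\otimes_\C\C[\Ss]$) gives $E_1^{p,q}\cong H^q(X,\Omega^{-p}_X)\otimes_\C\C[\Ss]$, which by Lemma~\ref{tate} vanishes unless $q=-p$. Hence the $E_1$-page is concentrated on the single antidiagonal $p+q=0$; every differential $d_r$ raises the total degree by one and therefore vanishes, so the sequence degenerates at $E_1$. In particular $H^i(\ZZ,\OO_\ZZ)=0$ for $i>0$, which is part~(1), and $\C[\ZZ]=H^0(\ZZ,\OO_\ZZ)$ acquires a filtration with graded pieces $H^j(X,\Omega^j_X)\otimes_\C\C[\Ss]$, $j=0,\dots,n$.

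To extract the Poincar\'{e} series I would make the whole picture $\Cs$-equivariant exactly as in the sketch of Theorem~\ref{clthm}: let $\Cs$ act on the term $p_X^*\Omega^j_X$ by $t^{2j}$ times the natural pullback $\tau_t$ coming from the $\Cs$-action on $M$. The relation $t_*V_\Ss=t^2V_\Ss$ recorded in Section~\ref{recap} makes each $\iota_{V_\Ss}$ equivariant, the action on $\OO_M$ is the geometric one inducing the grading on $\C[\ZZ]$, and the spectral sequence becomes $\Cs$-equivariant. On $E_1^{-j,j}=H^j(X,\Omega^j_X)\otimes\C[\Ss]$ the operator $\tau_t$ acts trivially on $H^j(X,\Omega^j_X)$ (it is induced by $H^{t^{-1}}$ acting on $X$, homotopic to the identity since $\Cs$ is connected) and by the standard grading on $\C[\Ss]=\Sym(\ttt^*)$, in which $\ttt^*$ has weight $2$ so that $\C[\Ss]\cong H^*_\Ts(\pt)$ is graded; together with the twist $t^{2j}$ this places $H^j(X,\Omega^j_X)$ in weight $2j$. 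Hence $P_{\C[\ZZ]}(t)=\big(\sum_{j}\dim_\C H^j(X,\Omega^j_X)\,t^{2j}\big)/(1-t^2)^r$.

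On the cohomology side, the $\Ts$-fixed points are isolated, so $H^*(X)$ is Tate and concentrated in even degrees; hence $X$ is equivariantly formal and $H^*_\Ts(X)\cong H^*(X)\otimes_\C\C[\ttt]$ as graded $\C[\ttt]$-modules, with $\dim H^{2j}(X)=\dim H^j(X,\Omega^j_X)$. This gives $P_{H^*_\Ts(X)}(t)=\big(\sum_j\dim_\C H^j(X,\Omega^j_X)\,t^{2j}\big)/(1-t^2)^r$, which agrees with the formula above, proving~(2). The step needing genuine care is the bookkeeping of the $\Cs$-weights through the spectral sequence---chiefly the vanishing of $\tau_t$ on each $H^j(X,\Omega^j_X)$ and the correct normalisation of the grading on $\C[\Ss]$; the remainder is a routine transcription of the Carrell--Lieberman/Akyildiz--Carrell degeneration argument combined with flat base change, the only new input beyond Theorem~\ref{clthm} being that $\ZZ$ is now positive-dimensional over $\Ss$.
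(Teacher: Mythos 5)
Your proposal is correct and follows essentially the same route as the paper: the Koszul resolution of $\OO_\ZZ$ on $\Ss\times X$ (valid because $\dim\ZZ=\dim\Ss$ forces the section to be regular), the hypercohomology spectral sequence collapsing onto the antidiagonal via Lemma~\ref{tate}, the flat base change $H^q(\Ss\times X,\pi_2^*\Ee)\cong H^q(X,\Ee)\otimes\C[\ttt]$, and the $\Cs$-equivariant weight bookkeeping combined with equivariant formality to match the Poincar\'{e} series. The only differences are cosmetic (you phrase the base change via $R^{>0}p_{X*}p_X^*=0$ and make the regular-section/Cohen--Macaulay point explicit, which the paper leaves implicit).
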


\begin{proof}
As the maximal torus $\Ts\subset \Bs$ acts algebraically on the smooth projective variety $X$, we have by \cite[14.1]{GKM} that $X$ is equivariantly formal. In particular, $H^*_\Ts(X)$ is a free graded module over $H^*_\Ts$, and there is an isomorphism $H^*_\Ts(X)/ \I H^*_\Ts(X) \simeq H^*(X)$ of graded $\C$-algebras. That allows us to compare the Poincar\'{e} series of equivariant and non-equivariant cohomology:

$$P_{H^*_\Ts(X)}(t) = \frac{P_{H^*(X)}(t)}{(1-t^2)^r},$$
where $r = \dim \Ts$ is the rank of $\Bs$.

The scheme $\ZZ$ is defined as the zero scheme of the vector field $V_\Ss$, which is a section of the vertical tangent bundle $T_v$. That bundle is the pullback of $T_X$ via the projection $\pi_2:\Ss\times X\to X$.

Note that $T_v$ is a vector bundle of rank equal to $n = \dim X$. Let $\Omega_v^p = \Lambda^p T_v^*$. Then the condition $\dim \ZZ = \dim \Ss$ implies that the Koszul complex
\begin{equation}
\label{Kosbig}
0\to \Omega_v^n \xrightarrow{\iota_{V_{\Ss}}} \Omega_v^{n-1} \xrightarrow{\iota_{V_{\Ss}}} \dots \xrightarrow{\iota_{V_{\Ss}}}  \Omega_v^1  \xrightarrow{\iota_{V_{\Ss}}}  \Omega_v^0 \to 0
\end{equation}
is a resolution of $\OO_\ZZ$. Therefore the hypercohomology of this complex equals the cohomology of $\OO_\ZZ$ on $\Ss\times X$. But note that as
$$H^*(\Ss\times X,\OO_\ZZ) = H^*(\ZZ,\OO_\ZZ),$$
the cohomology ring $H^*(\ZZ,\OO_\ZZ)$ is isomorphic to the hypercohomology of the Koszul complex \eqref{Kosbig}. We denote that Koszul complex by $K^\bullet$, with cohomological grading, where the index in superscript ranges from $-n$ to $0$, so that $K^p = \Omega_v^{-p}$.
We see that there is a spectral sequence with the first page
\begin{equation}
\label{spectr}
	E_1^{pq} = H^q(\Ss\times X,\Omega_v^{-p})
\end{equation}
convergent to $H^{p+q}(\ZZ,\OO_\ZZ)$.

From Lemma \ref{tate} we know that $H^q(X,\Omega^{-p})$ vanishes whenever $p+q\neq 0$. Note that for any vector bundle $\Ee$ on $X$ we have
$$H^q(\Ss\times X,\pi_2^*(\Ee)) \cong H^q(X,\Ee) \otimes \C[\ttt].$$
Indeed, the equality clearly holds for global sections and $\C[\ttt]$ is flat over $\C$. Therefore by Lemma \ref{tate}, the only potentially nonzero entries in the first page of the spectral sequence \eqref{spectr} are
$$E_1^{-p,p} = H^p(\Ss\times X,\Omega_v^{p}).$$

First, this means that $H^i(\ZZ,\OO_\ZZ)$ may only be nonzero if $i=0$. Second, the spectral sequence degenerates on the first page. From the degeneration, there is a filtration $F_\bullet$ on $H^0(\ZZ,\OO_\ZZ)$ such that 
\begin{equation}
\label{spseq}
    F_p/F_{p-1} \cong H^p(S\times X,\Omega_v^p) \cong H^p(X,\Omega^p) \otimes \C[\ttt].
\end{equation}
We follow the idea from the proof of Theorem \ref{clthm} and construct a $\Cs$-action on the Koszul complex.  For $t\in \Cs$, let $t_p:\Omega^p_v\to \Omega^p_v$ be the pullback of forms along the map $t:S\times X\to S\times X$. Recall that on $\Ss\times X$ the torus $\Cs$ acts by
$$t\cdot (e+v,x) = \left(t^{-2}\Ad_{H^t}(e+v), H^t x\right) = (e + t^{-2} v, H^t\cdot x)$$
and this satisfies the property $t_* V_{\Ss} = t^2 V_{\Ss}$.
As in the proof of Theorem \ref{clthm}, the following diagram commutes.

$$
\begin{tikzcd}
0\arrow[r] &
\Omega^n_v \arrow[rr, "\iota_{V_{\Ss}}"] \arrow[d, "t^{2n} t^{-1}_n"]&&
\Omega^{n-1}_v \arrow[rr, "\iota_{V_{\Ss}}"] \arrow[d, "t^{2(n-1)} t^{-1}_{n-1}"] &&
\dots \arrow[r, "\iota_{V_{\Ss}}"] \arrow[d]&
\Omega^{1}_v \arrow[rr, "\iota_{V_{\Ss}}"] \arrow[d, "t^{2} t^{-1}_1"]  &&
\OO_{\Ss\times X} \arrow[r] \arrow[d,"(t^{-1})^*"] & 0\\
0\arrow[r] &
\Omega^n_v \arrow[rr, "\iota_{V_{\Ss}}"] &&
\Omega^{n-1}_v \arrow[rr, "\iota_{V_{\Ss}}"] &&
\dots \arrow[r, "\iota_{V_{\Ss}}"] &
\Omega^{1}_v \arrow[rr, "\iota_{V_{\Ss}}"] &&
\OO_{\Ss\times X} \arrow[r] & 0
\end{tikzcd}
$$

Hence we have lifted the action of $\Cs$ on $\OO_{\Ss\times X}$ to the action on the whole Koszul complex. On the level of the spectral sequence, $t\in\Cs$ acts on 
$H^p(\Ss\times X,\Omega_v^{p}) = H^p(X,\Omega^p) \otimes \C[\ttt]$ by $t^{2p} t^{-1}_{p}$. As the multiplication by $t$ on $X$ is homotopic to the identity, $t_p^{-1}$ induces the identity on $H^p(X,\Omega^p)$. Therefore $\Cs$ acts on $H^p(X,\Omega^p)$ with the weight $2p$. Additionally, the action on $\C[\ttt]$ is nontrivial -- it is a scaling of weight 2. Hence the Poincar\'{e} series of $\C[\ttt]$ is $\frac{1}{(1-t^2)^r}$. As the spectral sequence is $\Cs$-equivariant, applying \eqref{spseq} we get

$$P_{\C[\ZZ]}(t) = \frac{\sum \dim H^p(X,\Omega^p) t^{2p}}{(1-t^2)^r} = \frac{P_{H^*(X)}(t)}{(1-t^2)^r} = 
P_{H^*_\Ts(X)}(t),$$
where the last part follows from the equivariant formality.
\end{proof}

To finish the solvable case, it remains to show the isomorphism $\rho:H^*_\Ts(X)\to H^0(\ZZ,\OO_\ZZ)$. We know that the Poincar\'{e} polynomials match on both sides, so it is enough to define a natural injective homomorphism $\rho$.

\begin{lemma} \label{lemrho}
Assume that a solvable principally paired group $\Bs$ acts on a smooth projective variety $X$, and the zero scheme $\ZZ\subset \Ss\times X$ over the Kostant section $\Ss=e+\ttt$ satisfies $\dim \ZZ = \dim \Ss$. Then there exists an injective graded map
\begin{equation}\label{nonregmap}
\rho:H^*_\Ts(X)\to \C[\ZZ]^{\red}
\end{equation}
from the equivariant cohomology to the reduction of the ring of functions on $\ZZ$.
\end{lemma}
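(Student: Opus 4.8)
The plan is to construct $\rho$ by defining it on closed points of $\ZZ$ and then checking that it lands in $\C[\ZZ]^{\red}$, following the strategy of \cite[Theorem 3.5]{HR} but carefully avoiding any use of the regularity hypothesis. First I would recall the structure of the closed points of $\ZZ$: a point is a pair $(e+v,x)$ with $v\in\ttt$ and $V_{e+v}(x)=0$. When $v$ is regular (which is a dense subset of $\ttt$), $e+v$ is semisimple and conjugate to $v$, so by \cite[Corollary 2.26]{HR} the zeros of $V_{e+v}$ are among the fixed points of the torus $\Ts$, and these are finite in number by the dimension hypothesis. For such $(e+v,x)$, one has the localization/restriction map $H^*_\Ts(X)\to H^*_\Ts(\{x\})\cong \C[\ttt]$, and evaluating at $v\in\ttt$ gives a number; letting $\rho(\alpha)$ be the function $(e+v,x)\mapsto \alpha|_x(v)$ on the regular-semisimple locus of $\ZZ$ and extending by continuity (or rather, working scheme-theoretically over the reduced scheme) defines a candidate homomorphism. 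Concretely, it is cleaner to first define the map into $\C[\ZZ_{\mathrm{rss}}]$ where $\ZZ_{\mathrm{rss}}\subset\ZZ$ is the locus lying over regular semisimple elements, and then argue this extends to $\C[\ZZ]^{\red}$ because $\ZZ^{\red}$ is reduced and the rss-locus is dense in each component — here I would need that $\ZZ$ has no component supported entirely over the non-regular locus of $\ttt$, which follows from $\dim\ZZ=\dim\Ss=r$ together with the fact that the projection $\ZZ\to\Ss$ cannot have an $r$-dimensional fiber class sitting over a proper closed subset of $\Ss$ unless... — actually this needs the finiteness of generic fibers, which we have, so every irreducible component of $\ZZ$ dominates $\Ss$ (a component not dominating $\Ss$ would have dimension $<r$ since generic fibers over its image are finite). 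Thus the rss-locus is dense in $\ZZ^{\red}$ and a function determined on it extends uniquely.

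The homomorphism property of $\rho$ is essentially formal once the definition is pinned down: both restriction $H^*_\Ts(X)\to H^*_\Ts(\{x\})$ and evaluation at $v$ are ring maps, and they vary algebraically in $(e+v,x)$, so $\rho$ is a ring homomorphism into $\C[\ZZ_{\mathrm{rss}}]$ and hence into $\C[\ZZ]^{\red}$. Gradedness should be checked against the $\Cs$-action: the cohomological grading on $H^*_\Ts(X)$ corresponds under restriction to $\{x\}$ to the grading on $\C[\ttt]$ by degree with the weight-$2$ scaling, and the $\Cs$-action on $\ZZ$ scales the $\ttt$-coordinate $v\mapsto t^{-2}\Ad_{H^t}v$ and moves $x$ by $H^t$; since $x$ is a torus fixed point in the rss-locus, $H^t x = x$, so the action is just $v\mapsto t^{-2}v$ there, matching the weight-$2$ convention. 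This is exactly the compatibility needed to conclude $\rho$ is graded.

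The main obstacle — and the step I would spend the most care on — is \textbf{injectivity}. In the regular case \cite{HR} this comes from the injectivity of localization to fixed points, i.e. $H^*_\Ts(X)\hookrightarrow \bigoplus_{p\in X^\Ts} H^*_\Ts(\{p\})$, which holds by equivariant formality (the $\Ts$-fixed points are isolated, as noted in the discussion before Lemma \ref{tate}). The point is that an element $\alpha\in H^*_\Ts(X)$ in the kernel of $\rho$ restricts to zero at each fixed point $x$ after evaluation at a dense set of $v\in\ttt$ — but for a fixed $x$, the fiber of $\ZZ_{\mathrm{rss}}$ over $x$ (as $v$ ranges) might \emph{not} be dense in all of $\ttt$; it is only the set of regular $v$ for which $x$ is actually a zero of $V_{e+v}$. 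However, for a torus fixed point $x$, $V_{e+v}(x) = V_e(x) + V_v(x)$, and $V_v(x)=0$ for all $v\in\ttt$ since $x$ is $\Ts$-fixed; so $x$ is a zero of $V_{e+v}$ iff it is a zero of $V_e$, independently of $v$. Hence either $x$ contributes to $\ZZ$ over all of $\ttt$ or over none of it, and for those $x$ that do contribute (which includes at least all $\Ts$-fixed points that are $e$-fixed) the evaluation $\alpha|_x(v)=0$ for a dense set of regular $v$ forces $\alpha|_x=0$ in $\C[\ttt]$. The remaining subtlety is whether \emph{every} $\Ts$-fixed point is an $e$-zero: it need not be, but the fixed points that are $e$-zeros are precisely the ones appearing in the central fiber of $\ZZ\to\Ss$ over $e$, and I would argue via the $\Cs$-action (which contracts $\Ss$ to $e$ and hence, by properness of $X$, contracts $\ZZ$ to its central fiber) that a class vanishing at all of these must already vanish — more precisely, equivariant formality plus the GKM-type description shows $H^*_\Ts(X)$ injects into functions on the $e$-fixed locus among $X^\Ts$. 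If that last point turns out to need the finer GKM hypothesis rather than mere equivariant formality, the fallback is to invoke injectivity of $\rho$ at the level of generic points of components of $\ZZ$ directly: each component dominates $\Ss$, a nonzero $\alpha$ has nonzero image in the stalk at the generic point of some component after the identification of that component's function field with a finite extension of $\C(\ttt)$, because $H^*_\Ts(X)$ is a domain... which it is not in general — so I do expect the clean argument to be the fixed-point one above, and verifying that $X^\Ts$-points which are $e$-zeros suffice for injectivity of localization is the crux I would need to nail down.
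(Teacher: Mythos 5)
There are genuine gaps here, and they stem from one early misstep. For regular $v\in\ttt$, the element $e+v$ is conjugate to $v$ via some $M\in\Bs$, so the zeros of $V_{e+v}$ are the translates $M\zeta_i$ of the torus-fixed points, \emph{not} the fixed points themselves. Your claim that ``the zeros of $V_{e+v}$ are among the fixed points of the torus'' is false, and it propagates: $H^*_\Ts(\{x\})$ is not defined for a non-$\Ts$-invariant point $x=M\zeta_i$, so your candidate formula $\alpha|_x(v)$ needs to be replaced by $\alpha|_{\zeta_i}(v)$, localizing at the fixed point $\zeta_i$ with $x=M\zeta_i$ (this is what the paper does, and it then has to check independence of the choices). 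The same confusion derails your injectivity discussion: the worry about which fixed points are $e$-zeros is a red herring. The correct observation is that for \emph{every} regular $w$ and \emph{every} $\zeta_i\in X^\Ts$ the point $(e+w,M_w\zeta_i)$ lies in $\ZZ$, so the values of $\rho(c)$ recover $c|_{\zeta_i}(w)$ for a dense set of $w$ and all $i$; hence all localizations of $c$ are recovered and injectivity of localization (equivariant formality) finishes the argument. No contraction to the central fibre and no GKM-type refinement is needed.

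The second, independent gap is the extension step. Defining $\rho(c)$ on the regular-semisimple locus and invoking density of that locus only gives \emph{uniqueness} of an extension to $\C[\ZZ]^{\red}$, not \emph{existence}: a regular function on a dense open subset of a reduced scheme need not extend (think of $1/f$ on a non-normal curve). The paper's proof supplies existence by a concrete mechanism that your proposal omits entirely: $H^*_\Ts(X)$ is generated by equivariant Chern classes of $\Bs$-linearised bundles (\cite[Lemma 3.12]{HR}), and for $c=c_k^\Ts(\Ee)$ one proves
$$\rho(c)(e+w,x)=\Tr_{\Lambda^k\Ee_x}\bigl(\Lambda^k(e+w)_x\bigr),$$
whose right-hand side is manifestly a global regular function on $\ZZ$. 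The verification of this identity at an arbitrary (not necessarily rss) point $(e+w,x)$ uses the Jordan decomposition $e+w=\Ad_M(w+n)$ and constancy of regular functions on the projective component of the zero set of $w+n$ containing $M^{-1}x$, which also shows $\rho$ is well defined on all closed points at once. Without this (or some substitute such as normality of $\ZZ^{\red}$, which is not available), the map in your proposal is only defined on $\C[\ZZ_{\mathrm{rss}}]$ and the lemma is not proved.
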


Note that a priori, the scheme $\ZZ$ could be non-reduced, and so could be $\C[\ZZ]$. However, $\C[\ZZ]^{\red}$ is also a graded ring, and it is a quotient of $\C[\ZZ]$, hence every coefficient of $P_{\C[\ZZ]^{\red}}(t)$ is less than or equal to the corresponding coefficient in $P_{\C[\ZZ]}(t)$. Then the existence of an injective map \eqref{nonregmap}, together with $P_{\C[\ZZ]}(t) = P_{H^*_\Ts(X)}(t)$, will also prove that $\C[\ZZ]$ is in fact a reduced ring. This will imply $\rho$ being an isomorphism $H^*_\Ts(X)\cong \C[\ZZ]$.

\begin{proof}[Proof of Lemma \ref{lemrho}]

As the scheme $\ZZ$ is Noetherian, the ring $\C[\ZZ]^{\red}$ is the ring of those set-theoretic functions from the closed points of $\ZZ$ to $\C$ which are given by evaluations of global regular functions in $\C[\ZZ]$. Let $c\in H^*_\Ts(X,\C)$. We will define $\rho(c)$ as a function on the closed points of $\ZZ$, and show that it comes from a regular function on $\ZZ$. Let $w\in\ttt$, $x\in X$ be such that $(e+w,x)\in \ZZ$, so that the vector field $V_{e+w}$ vanishes at $x$. 

By \cite[Theorem 2.25]{HR}, there exists $M\in \Gs$ such that $e+w = \Ad_M(w+n)$ with $[w,n] = 0$ and $n$ nilpotent. Then, as $V_{e+w}$ vanishes at $x$, by \cite[Lemma 2.6]{HR} we have $x = M y$ for some $y$, which is a zero of $w+n$. Let $P$ be an irreducible component of the zero scheme of $w+n$ such that $y\in P$. From \cite[Lemma 5.12]{HR} we know that $P$ contains a fixed point $\zeta_i$ of $\Ts$. Both $P$ and $\zeta_i$ might be non-unique. However, let us make a choice and define
\begin{equation}
\label{defrho}
\rho(c)(e+w,x) = c|_{\zeta_i}(w).
\end{equation}
Here the localisation $c|_{\zeta_i}$ is an element of $H^*_\Ts(\pt) = \C[\ttt]$ and hence a function on $\ttt$, which we apply to $w$. We have thus defined $\rho(c)$ as a function on the closed points of $\ZZ$, depending on a choice for each point.

We will show that so defined $\rho(c)$ is in $\C[\ZZ]^{\red}$ -- and it will follow from the proof that the values of $\rho(c)$ do not depend on the choices. It will also be clear that the map $\rho$ is a homomorphism of rings, as the restriction to the $\Ts$-fixed points is. For the latter, it is enough to choose $P$ and $\zeta_i$ independently of $c$, i.e. uniquely for every point $(e+w,x) \in \ZZ$.

By \cite[Lemma 3.12]{HR} the algebra $H^*_\Ts(X)$ is generated by the $\Ts$-equivariant Chern classes of $\Bs$-linearised vector bundles. Therefore it is enough to prove that if $c = c_k^\Ts(\Ee)\in H^*_\Ts(X)$ is a Chern class of a $\Bs$-linearised vector bundle $\Ee$ on $X$, the function $\rho(c)$ comes from a regular function. We will prove it by showing that
\begin{equation}
\label{rhoforclass}
    \rho(c)(e+w,x) = \Tr_{\Lambda^k \Ee_x}(\Lambda^k (e+\w)_x),
\end{equation}
as the right-hand side is defined by a regular function on $\ZZ$. Notice that as the right-hand side is independent of the choices, this will also imply that $\rho$ is well-defined.

The regularity of the right-hand side of \eqref{rhoforclass} implies that the function is constant on any projective subvariety. Let us consider a fixed $(e+w,x)\in\ZZ$ and $M$ as above, so that $e+w = \Ad_M(w+n)$ with $[w,n] = 0$ and $n$ nilpotent. Let $P$ be an irreducible component of the zero scheme of $w+n$ containing $y = M^{-1}x$, and let $\zeta_i \in X^\Ts\cap P$. As $P$, and hence also $M\cdot P$, is projective, we have
$$\Tr_{\Lambda^k \Ee_x}(\Lambda^k (e+\w)_x) = 
\Tr_{\Lambda^k \Ee_{My}}(\Lambda^k (e+\w)_{My}) =
\Tr_{\Lambda^k \Ee_{M\zeta_i}}(\Lambda^k (e+\w)_{M\zeta_i}).
$$
As $\Ee$ is $\Bs$-linearised and $e+w = \Ad_M(w+n)$, this is equal to
$$\Tr_{\Lambda^k \Ee_{\zeta_i}}(\Lambda^k (\w+n)_{\zeta_i}).$$
Further, as $\w+n$ is a Jordan decomposition, this equals
$$\Tr_{\Lambda^k \Ee_{\zeta_i}}(\Lambda^k (\w)_{\zeta_i}).$$
By \cite[(3.6)]{HR} this is exactly $c|_{\zeta_i}(w)$.

Therefore we have a well-defined map $\rho$. We only need to show that it is injective. We know that for every $w\in\ttt^{\reg}$ there exists $M\in\Bs$ such that $\Ad_M(w) = e+w$. Then for any $\zeta_i\in X^\Ts$ we have $(e+w,M\zeta_i)\in\ZZ$. By definition,
$$\rho(c)(e+w,M\zeta_i) = c|_{\zeta_i}(w).$$
As the regular elements are dense in $\ttt$, the values of $c|_{\zeta_i}$ on $\ttt^\reg$ recover $c|_{\zeta_i}$. Hence from the values of $\rho(c)$ we can recover the localisations of $c$ to all the $\Ts$-fixed points. By the injectivity of localisation for equivariantly formal spaces \cite[6.3]{GKM}, these define $c$ uniquely. Therefore the map $\rho$ is injective.
\end{proof}

This finishes the proof of Theorem \ref{nonreg} in the solvable case. It remains to deduce the general case from the same result for a Borel subgroup.

\begin{proof}[Proof of Theorem \ref{nonreg} for an arbitrary principally paired group]
Let $\Bs$ be the Borel subgroup of $\Gs$ whose Lie algebra $\bb$ contains the principal nilpotent $e$. We use \cite[Section 4.3]{HR} to obtain regular maps $A:\ttt\to\Gs$ and $\chi:\ttt\to \Ss$ such that:
\begin{enumerate}
    \item $\Ad_{A(w)}(e+w) = \chi(w)$ for any $w\in \ttt$;
    \item the map $\chi$ is $\Ws$-invariant an induces an isomorphism $t/\!\!/\Ws\to \Ss$.
\end{enumerate}

We know already that when we consider the zero scheme $\ZZ_\Bs\subset (e+\ttt)\times X$, it has no coherent cohomology and $\C[\ZZ_\Bs] = H^*_\Ts(X)$. From the first point above, the morphism $(e+w,x)\mapsto (w,A(w)x)$ gives an isomorphism between $\ZZ_\Bs$ and $\ZZ_\Bs'\subset \ttt\times X$ defined as the zero scheme of the vertical vector field which over $w\in\ttt$ equals $V_{\chi(w)}$. By definition $\ZZ_\Bs'$ is then a pullback of $\ZZ$:

$$
\begin{tikzcd}
\ZZ_\Bs' \arrow[r] \arrow[d]
& \ZZ\arrow[d]
\\
\ttt \arrow[r, "\chi"]
& \Ss 
\end{tikzcd}
$$
This means that 
$$H^*(\ZZ_\Bs',\OO_{\ZZ_\Bs'}) = H^*(\ZZ,\OO_\ZZ) \otimes_{\C[\ttt]^\Ws} \C[\ttt]$$ and on the other hand 
$$H^*(\ZZ,\OO_\ZZ) =  H^*(\ZZ_\Bs',\OO_{\ZZ_\Bs'})^\Ws,$$
where the Weyl group $\Ws$ acts on the base $\ttt$. In particular, $\ZZ$ does not have higher coherent cohomology.

We already know the isomorphism $H^*_\Ts(X)\cong  H^0(\ZZ_\Bs',\OO_{\ZZ_\Bs'})$ and moreover $H^*_\Gs(X) = H^*_\Ts(X)^\Ws$. Hence to arrive at $H^*_\Gs(X)\simeq  H^0(\ZZ,\OO_\ZZ)$, we only have to prove that the Weyl action on $\ZZ_\Bs'$ coming from the action on $\ttt$ is dual to the topological action on $H^*_\Ts(X)$. This follows the proof of \cite[Lemma 4.2]{HR}. By convention we let $\Ws$ act on the left on $\ttt$, and on the right on $\C[\ttt]$ and $H^*_\Ts(X)$.

Consider a class $c\in H^*_\Ts(X) \cong \C[\ZZ_\Bs']$. For any torus-fixed point $\zeta_i\in X^\Ts$ and $\eta\in\Ws$ we have
\begin{equation}
\label{weylact}
(\eta^*c)|_{\zeta_i} = (c|_{\eta\zeta_i})\circ\eta.
\end{equation}
Here $\eta\zeta_i\in X^\Ts$ is the torus-fixed point to which $\eta T \subset \Gs$ maps $\zeta_i$. As $c|_{\eta\zeta_i}\in H^*_\Ts(\pt)=\C[\ttt]$ is a function on $\ttt$, it makes sense to precompose it with $\eta:\ttt\to\ttt$. 

We know that the localisation to torus-fixed points is injective, hence \eqref{weylact} uniquely determines $\eta^*c$. Moreover, to compare two functions on $\ttt$, we can compare their values on an open subset of $\ttt$. We will then evaluate the localisations on the open subset $\ttt^o\subset \ttt$ consisting of regular elements $w\in\ttt$ such that $V_{e+w}$ has isolated zeros on $X$.

Let $\tau:H^*_\Ts(X)\to \C[\ZZ_\Bs']$ be the isomorphism -- composition of $\rho:H^*_\Ts(X)\xrightarrow{\simeq} \C[\ZZ_\Bs]$ and the pullback along the isomorphism $(w,A(w)x)\mapsto (e+w,x)$ between $\ZZ_\Bs'$ and $\ZZ_\Bs$. Take $\w\in\ttt^o$ and $\eta\in\Ws$ and let $M_w,M_{\eta w}\in \Bs$ be such that $\Ad_{M_w}(w) = e+w$ and $\Ad_{M_{\eta w}}(\eta w) = e+\eta w$. By \eqref{defrho} for any $c\in H^*_\Ts(X)$, $\zeta_i\in X^\Ts$ and $w\in\ttt^o$ we then have
\begin{equation}
\label{woal1}
(\eta^*c)|_{\zeta_i}(w) = \rho(\eta^*c)(e+w,M_w\zeta_i) = \tau(\eta^*c)(w,A(w)M_w\, \zeta_i).
\end{equation}
On the other hand, applying \eqref{weylact}, we get
\begin{equation}
\label{woal2}
(\eta^*c)|_{\zeta_i}(w) = c|_{\eta\zeta_i}(\eta w) = \rho(c)(e+\eta w,M_{\eta w}\, \eta\zeta_i) = \tau(c)(\eta w,A(\eta w)M_{\eta w}\eta\zeta_i).
\end{equation}
By the definition of $M_w$, $M_{\eta w}$, $A_w$, $A_{\eta w}$ we have that
$$\Ad_{M_{\eta w}^{-1}A(\eta w)^{-1}A(w)M_w}(w) = \eta w.$$
As $w$ is regular, and its centraliser is connected by \cite[Corollary 3.11]{Steinberg}, we have 
$$M_{\eta w}^{-1}A(\eta w)^{-1}A(w)M_w \in \eta \Ts.$$
This means that $A(\eta w)M_{\eta w}\, \eta\zeta_i = A(w)M_w\, \zeta_i$. Hence from \eqref{woal1}, \eqref{woal2} we get that $\tau(\eta^*c) = \tau(c) \circ \eta$, where $\eta$ is understood to act on $\ZZ_\Bs'$ by acting on $\ttt$. Hence the topological Weyl group action on $H^*_\Ts(X)\simeq \C[\ZZ_\Bs']$ agrees with the Weyl group action on the $\ttt$ factor in $\ZZ_\Bs'\subset \ttt\times X$.

Therefore we have the natural graded isomorphisms
$$H^*_\Gs(X) \cong H^*_\Ts(X)^\Ws\cong \C[\ZZ_\Bs]^\Ws \cong \C[\ZZ],$$
as desired.
\end{proof}

\begin{example} \label{exsph}
Assume that $X$ is a smooth projective \emph{spherical} variety for a reductive group $\Gs$ \cite{BrionSph}. Then it satisfies the conditions of Theorem \ref{nonreg}. Recall that $X$ being spherical means that $\Gs$ acts on $X$ and the Borel subgroup $\Bs\subset \Gs$ has an open dense orbit in $X$. If $\Gs$ is a torus, then $\Bs = \Gs$ and $X$ is simply a toric variety. In particular, all the wonderful compactifications of $G/H$ for $H = G^\sigma$, where $\sigma:\Gs\to\Gs$ is an involution, are spherical \cite{deConcProc}. Classical examples include the \emph{variety of complete collineations} \cite{Thadd,VainsencherLinn,Tyrrell}, which compactifies $\PGL_n$, and the \emph{variety of complete quadrics} \cite{deConcQuad,Tyrrell,VainsencherQuad}, which compactifies $\SL_n/\SO_n$. The renewed interest in spherical varieties in recent years \cite{sph,lect} comes from their connections to mirror symmetry.

To see that spherical varieties satisfy the conditions of Theorem \ref{nonreg}, recall that spherical $\Gs$-varieties have only finitely many $\Gs$-orbits \cite[1.5]{BrionSph}. Then the claim follows from the following lemma.

\begin{lemma}
Assume that a principally paired group $\Gs$ acts on a smooth variety $X$. Let $\Ss \subset \geg$ be the Kostant section. Let $\ZZ \subset \Ss\times X$ be the zero scheme of the vector field $V_\Ss$. Then the following are equivalent:
\begin{enumerate}
    \item $\dim \ZZ = \dim \Ss$ or $\ZZ$ is empty;
    \item There are only finitely many orbits $\Gs\cdot x$ in $X$ such that the Lie algebra $\Lie(\Stab x)$ of the stabiliser contains a regular element.
\end{enumerate}
\end{lemma}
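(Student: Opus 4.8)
The plan is to stratify $\ZZ$ by the $\Gs$-orbits in $X$ and to compute the dimension of each stratum separately. Write $\pi_2\colon\ZZ\to X$ for the second projection and, for a $\Gs$-orbit $\OO\subseteq X$, set $\ZZ^{(\OO)}:=\ZZ\cap\pi_2^{-1}(\OO)$; since $\OO$ is locally closed in $X$, each $\ZZ^{(\OO)}$ is locally closed in $\ZZ$, and $\ZZ$ is their disjoint union over all orbits. As every element of $\Ss$ is regular (Proposition \ref{kostprop}), a point of $\ZZ^{(\OO)}$ exhibits a regular element of $\Lie(\Stab x)$ for $x\in\OO$; conversely, if $\Lie(\Stab x)$ contains a regular element $w$, and $v\in\Ss$ is its unique conjugate in the Kostant section (Proposition \ref{kostprop}), say $w=\Ad_g v$, then $(v,g^{-1}x)\in\ZZ^{(\OO)}$. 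Hence $\ZZ^{(\OO)}\neq\emptyset$ precisely for the orbits occurring in condition (2); call these \emph{good}. It therefore suffices to show that a good stratum always has dimension exactly $r:=\dim\Ss=\rk\Gs$, and then to exploit the fact that $\ZZ$ has only finitely many irreducible components.

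To find $\dim\ZZ^{(\OO)}$ for $\OO$ good, I would introduce the total model $\ZZ_{\tot}^{(\OO)}:=\{(v,x)\in\geg\times X : x\in\OO,\ v\in\Lie(\Stab x)\}$. The infinitesimal action $\geg\times\OO\to TX|_{\OO}$ is a morphism of vector bundles over the smooth irreducible orbit $\OO$ of constant rank $\dim\OO$, so its kernel is a subbundle of rank $\dim\Gs-\dim\OO$ and $\ZZ_{\tot}^{(\OO)}$ is its total space; in particular $\ZZ_{\tot}^{(\OO)}$ is irreducible of dimension $\dim\Gs$, and, $\OO$ being good, its regular locus $(\ZZ_{\tot}^{(\OO)})^{\reg}$ is a nonempty open subset, again irreducible of dimension $\dim\Gs$. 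The map
$$\Gs\times\ZZ^{(\OO)}\longrightarrow(\ZZ_{\tot}^{(\OO)})^{\reg},\qquad (g,(v,x))\longmapsto(\Ad_g v, gx),$$
is surjective, because every regular element is conjugate to a (unique) point of $\Ss$; and by Proposition \ref{kostprop} each of its fibres is a coset of the centraliser $C_\Gs(v_0)$ of the corresponding regular $v_0\in\Ss$, hence of dimension $r$. Comparing dimensions yields $\dim\Gs+\dim\ZZ^{(\OO)}=\dim\Gs+r$, so $\dim\ZZ^{(\OO)}=r$.

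The equivalence then follows. If (2) holds, $\ZZ$ is the finite union of the good strata, each of dimension $r$, with $\ZZ=\emptyset$ when there are no good orbits; thus (1) holds. Conversely, assume (1): the case $\ZZ=\emptyset$ forces the good orbits to be absent, so suppose $\dim\ZZ=r$ and let $Z_1,\dots,Z_N$ be the irreducible components of $\ZZ$. For each good orbit $\OO$ the closure $\overline{\ZZ^{(\OO)}}$ in $\ZZ$ has dimension $r=\dim\ZZ$, hence contains some component $Z_j$ with $\dim Z_j=r$; as $\ZZ^{(\OO)}$ is open and dense in $\overline{\ZZ^{(\OO)}}$, its intersection with $Z_j$ is dense and open in $Z_j$. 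Assigning such an index $j$ to $\OO$ defines a map from the set of good orbits to $\{1,\dots,N\}$, and it is injective: two dense open subsets of the irreducible $Z_j$ necessarily meet, whereas $\ZZ^{(\OO)}$ and $\ZZ^{(\OO')}$ are disjoint for $\OO\neq\OO'$. Hence there are at most $N$ good orbits, i.e.\ (2) holds.

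The main obstacle is the dimension computation for good strata. It rests entirely on the characteristic property of the Kostant section — each regular adjoint orbit meets $\Ss$ in exactly one point (Proposition \ref{kostprop}) — which is what simultaneously forces surjectivity and equidimensionality of the auxiliary map onto $(\ZZ_{\tot}^{(\OO)})^{\reg}$; without it the fibres over different conjugacy classes could have different dimensions. The point-set bookkeeping in the last step is routine but must be handled carefully, since the strata $\ZZ^{(\OO)}$ are not closed in $\ZZ$.
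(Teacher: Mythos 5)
Your proof is correct and follows essentially the same route as the paper's: both stratify the zero scheme by $\Gs$-orbits, show that each orbit whose stabiliser algebra contains a regular element contributes a stratum of dimension exactly $\rk\Gs$ (via the constant-rank kernel of $\geg\times\OO\to TX|_{\OO}$ together with the fact that each regular adjoint orbit meets $\Ss$ in one point and has an $r$-dimensional centraliser), and then compare the number of such strata with the finitely many irreducible components of $\ZZ$. The only cosmetic difference is that the paper passes from the regular total zero scheme over $\geg^{\reg}$ down to $\ZZ\subset\Ss\times X$ once at the end, whereas you perform that reduction orbit by orbit.
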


\begin{proof}
 Let us first consider a single orbit $\OO = \Gs/\Ks$ for $\Ks \subset \Gs$ a subgroup with the Lie algebra $\Lie(\Ks) = \kek$. We study $\ZZ^\reg\subset \geg^\reg\times \OO$, the zero scheme of the vector field defined by the action, restricted to the regular elements of $\geg$. By homogeneity of $\OO$, the dimensions of all the fibres of $\ZZ^{\reg}$ over the elements of $\OO$ are the same. Hence $\dim \ZZ^\reg = \dim \OO + \dim \ZZ^\reg_1$, where $\ZZ^\reg_1$ is the fibre of $\ZZ^{\reg}$ over $[1] = [\Ks]\in \Gs/\Ks$. But that fibre is simply equal to $\geg^\reg\cap \kek$.
 
 As $\geg^\reg\subset \geg$ is open and dense, that intersection is also open in $\kek$. In case $\kek$ does not contain a regular element, the intersection is empty. Otherwise, it forms an open dense subset of $\kek$, hence of dimension equal to $\dim K$. In the first case $\ZZ^\reg$ is empty. In the other case, it is of dimension $\dim \OO + \dim \kek = \dim \Gs$. Now if instead of a single orbit we consider a larger $\Gs$-variety $X$, we notice that

 \begin{enumerate}
     \item if there are finitely many orbits in $X$ with a regular element in the Lie algebra of the stabiliser, then $\ZZ^\reg$ is  either empty, or of dimension equal to the dimension of $\Gs$;
     \item if there are infinitely many orbits in $X$ with a regular element in the Lie algebra of the stabiliser, then $\ZZ^\reg$ is of dimension larger than the dimension of $\Gs$;
 \end{enumerate}
 Now, by the properties of the Kostant section, i.e. Proposition \ref{kostprop}, 
 $$\dim \ZZ = \dim \ZZ^\reg - (\dim \Gs - \rk \Gs).$$
 Then in the first case above, either $\ZZ$ is empty, or $\dim \ZZ = \rk \Gs = \dim \Ss$. In the second case, we have $\dim \ZZ > \dim \Ss$.
\end{proof}

Notice that if $X$ is projective, then by Borel fixed-point theorem $e\in \geg$ has a zero on $X$, hence $\ZZ$ is nonempty. The scheme $\ZZ$ is in general not affine. For that reason, it is not easy to calculate the ring of functions on $\ZZ$. 
This remains a computational challenge for nontrivial spherical varieties.
\end{example}

\begin{remark}
As for any spherical variety $X$ we have $\dim \ZZ = \dim \Ss$, the generic fibre of the projection $\ZZ\to \Ss$ is finite. However, the special fibres of $\ZZ\to \Ss$ in general have positive dimension. Suppose $X$ is the variety of complete collineations, i.e. the compactification of $\PGL_n$ with the action of $\GL_n\times\GL_n$, which on $\PGL_n$ comes from the left and right multiplication, i.e.
$$(A,B)\cdot M = AMB^{-1}$$
for $A,B\in\GL_n$, $M\in\PGL_n$. Now for a regular nilpotent $e\in\gl_n$, as in Example \ref{exgl}, the element $(e,e)\in \gl_n\times\gl_n$ is a regular nilpotent in $\Lie(\GL_n\times\GL_n)$. However, notice that its zero scheme in $\PGL_n$ contains the whole (projectivisation of) the centraliser of $e$, hence is of dimension $n-1$. Therefore the fibre of $\ZZ$ over the regular nilpotent is of highly positive dimension.
\end{remark}

Notice that we showed that $\C[\ZZ]$ is a reduced ring, but we did not prove that $\ZZ$ is a reduced scheme. The following example shows that it might not be.

\begin{example}\label{exthick}
Consider a copy of the Borel subgroup $\Bs_2$ of $\SL_2$, embedded as the subgroup $\Gs\subset\GL_3$  of matrices of the form
$$\begin{pmatrix}
t^2 & u & 0 \\
0 & 1 & 0 \\
0 & 0 & t^{-2}
\end{pmatrix}.$$
Let it then act on $X=\PP^2$ by restricting the standard action of $\GL_3$. We will study the zero scheme $\ZZ\subset \Ss\times X$. Notice that in this situation
$$\Ss = \left\{ \left.
\begin{pmatrix}
v & 1 & 0 \\
0 & 0 & 0 \\
0 & 0 & -v
\end{pmatrix} \right| v\in\C  \right\}\subset \gl_3.$$
The regular nilpotent $e\in\geg$ is the matrix we obtain by plugging in $v=0$.
For any $v\in \C$ we then have
$$\begin{pmatrix}
v & 1 & 0 \\
0 & 0 & 0 \\
0 & 0 & -v
\end{pmatrix}
\begin{pmatrix}
0 \\
1 \\
0 
\end{pmatrix} =
\begin{pmatrix}
1 \\
0 \\
0 
\end{pmatrix}.$$
As the right-hand side is not proportional to $\begin{pmatrix}
0 & 1 & 0
\end{pmatrix}^T$, the point $[0:1:0]\in\PP^2$ is not a zero of any element in $\Ss$. Hence $\ZZ\subset \Ss\times (\PP^2\setminus\{[0:1:0]\})$. We have
$$\PP^2\setminus\{[0:1:0]\} = X_0 \cup X_2,$$
where $X_i\subset \PP^2 = \{[x_0:x_1:x_2]\in \PP^2 | x_i\neq 0\}$. Therefore to understand $\ZZ$ it is enough to study the zero scheme over $X_0$ and over $X_2$.

Let $X_0 = \{[1:a:b]|a,b\in\C\}$. The vector field $V_e$ defined by $e$ has the local coordinates $(-a^2,-ab)$. The vector field defined by $\diag(v,0,-v)$ has the coordinates $(-va,-2vb)$. Hence the restriction $\ZZ\cap (\Ss\times X_0) \subset \Ss\times X_0 \simeq \Spec \C[v,a,b]$ is defined by the equations
$$a^2+va =0, \quad ab+2vb =0.$$
One then sees that the closed points form three lines, with equations
\begin{equation}
\label{abv}(a=0,v=0),\quad (a=0,b=0),\quad (a=-v,b=0).
\end{equation}
The first of those is however non-reduced. Namely, localising at $b$ yields equations $v^2=0,a+2v=0$.

Over $X_2 = \{[c:d:1]|c,d\in\C]$ the scheme $\ZZ\cap (\Ss\times X_2)$ has the equations
$$d+2vc=0, \quad vd=0.$$
The closed points form two lines, with equations
$$(d=0,v=0),\quad (c=0,d=0).$$
The first one, together with the first line from \eqref{abv}, glue together to a thick $\PP^1$ embedded in $\Ss\times \PP^2$ as $\{e,[x_0:0:x_2]\}$. Localisation at $c$ gives $v^2=0,dc^{-1}+2v=0$. and shows again that the component is non-reduced.

Two views of the affine part of the zero scheme $\ZZ$, inside $\Ss\times \{[x:y:1-x]|x,y\in\C\}$, are shown in Figure \ref{figthick}. All the components are visible. The thick line is blurred out in its tangent directions, and it extends to a thick $\PP^1$ in the whole $\ZZ$.

We leave it as an exercise for the reader to directly compute that 
$$\C[\ZZ] = \C[v,a]/a(a+v)(a+2v).$$

\begin{figure}[ht!]
\begin{center}
 \subfloat{
  \includegraphics[width=7cm]{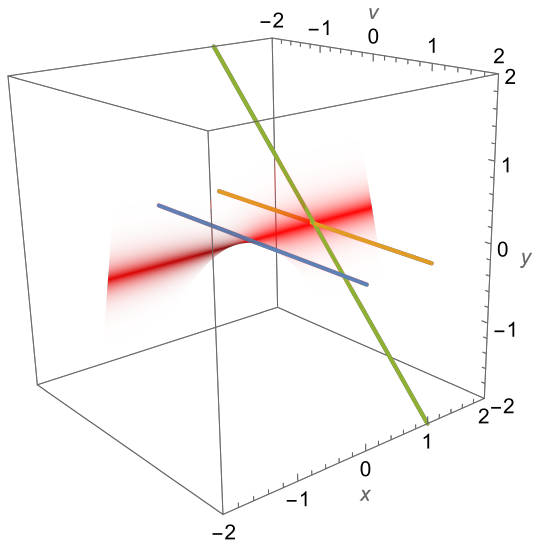}}
  \hfill
\subfloat{
  \includegraphics[width=7cm]{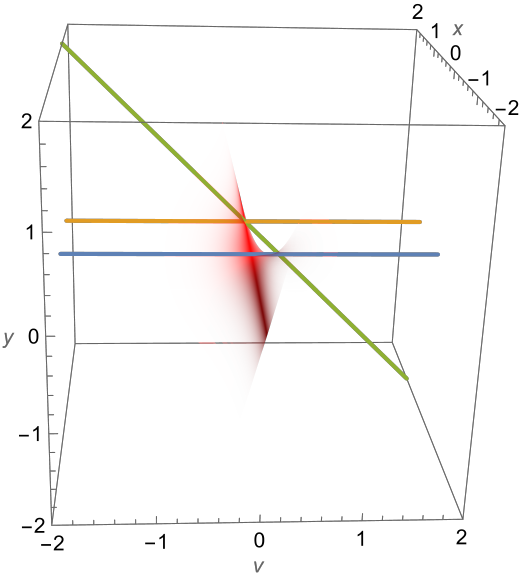}}
\end{center}
\caption{Zero scheme $\ZZ$ from Example \ref{exthick}. An affine patch $\Ss\times \{[x:y:1-x]|x,y\in\C\}$ shown.}
\label{figthick}
\end{figure}
\end{example}

\begin{remark}
In case $\Gs = \Ts$ is a torus and the action of $\Ts$ is faithful, the condition of the lemma can only be satisfied for $X$ being a toric variety. Indeed, in a torus we have $e = 0$, hence $\ZZ$ contains $\{0\}\times X$ as a subscheme. Therefore $\dim X \le \dim \Ts$. For a faithful action, this only holds when $\dim X = \dim \Ts$ and $X$ is toric. In that case, the variety is also a GKM space, so one can apply Theorem 5.16 from \cite{HR}. That theorem holds for a larger class of spaces, however it does not say anything about higher cohomology $H^i(\ZZ,\OO_\ZZ)$. In fact, if $X$ is not a toric variety, then $H^i(\ZZ,\OO_\ZZ)$ might be nonzero even for $i>0$. For example, if $X = \Gr(4,2)$ is the Grassmannian of 2-planes in $\C^4$ and $\Ts \subset \SL_4$ is a maximal torus, of dimension 3, then $H^2(\ZZ,\OO_\ZZ) = \C$.

Note also that as for a torus $\Ts$ the regular (and only) nilpotent in $\ttt = \Lie(\Ts)$ is $0$, no nontrivial variety is regular with respect to a $\Ts$-action. Therefore Theorem \ref{thmhr} cannot be applied directly.

\end{remark}

We have demonstrated that for a principally paired group $\Gs$ and a smooth projective $\Gs$-variety $X$, there are different levels of correspondence between the equivariant cohomology of $X$ and the ring of functions and coherent cohomology on $\ZZ$.

\begin{enumerate}
\item If the action of $\Gs$ is regular, then $\rho:H^*_\Gs(X)\to\C[\ZZ]$ is an isomorphism. Moreover, the variety $\ZZ$ is affine. Conversely, if $\ZZ$ is affine, then so is any fibre over an element of $\Ss$. In particular the zero scheme of $e\in\geg$ is affine. However, it is projective, as $X$ is projective, hence it is of dimension $0$. As $e$ generates an additive subgroup of $\Gs$, by \cite{Horrocks} its zero scheme has to be connected. Therefore it is just a single, potentially non-reduced, point. In other words, the action is regular.
\item If the action of $\Gs$ satisfies the condition of Theorem \ref{nonreg}, i.e. $\dim \ZZ = \rk \Gs$, then $H^*_\Gs(X)\to \C[\ZZ]$ is still an isomorphism, and the higher coherent cohomology of $\OO_\ZZ$ is trivial. However, $\ZZ$ does not need to be affine anymore.
\item There are some more situations where the action does not satisfy $\dim \ZZ = \rk \Gs$, but still $\C[\ZZ]\simeq H^*_\Gs(X)$. This covers in addition the case of a torus acting on a GKM space \cite[Theorem 5.16]{HR}. It is not clear what the exact conditions are, under which the isomorphism holds. Note that at least when the torus-fixed points are isolated, we can always define the comparison map $\rho: H^*_\Gs(X) \to \C[\ZZ]^{\red}$.
\end{enumerate}

We then have the following diagram of implications. In the first column we have conditions on the geometry of $\Gs$-action, and in the second the relations between $\ZZ$ and equivariant cohomology. In the third column one sees intrinsic properties of $\ZZ$.

$$
\begin{tikzcd}
\text{Regular action} \arrow[d, Rightarrow, thick] \arrow[r, Leftrightarrow, thick] & \ZZ\simeq \Spec H^*_\Gs(X,\C) \arrow[d, Rightarrow, thick] \arrow[r, Leftrightarrow, thick] & \ZZ \text{ affine}  \arrow[d, Rightarrow, thick] \\
\dim \ZZ = \rk \Gs \arrow[r,Rightarrow, thick] \arrow[d, Rightarrow, thick]
& H^i(\ZZ,\OO_{\ZZ}) = \begin{cases}
H^*_\Gs(X,\C) \text{ for } i = 0; \\
0 \text { otherwise}
\end{cases}  \arrow[r,Rightarrow, thick] \arrow[d, Rightarrow, thick] &
\begin{array}{c}
H^i(\ZZ,\OO_{\ZZ}) = 0\\
\text{ for } i>0
\end{array} \\
?   \arrow[r,Rightarrow, thick] & H^0(\ZZ,\OO_\ZZ) = H^*_\Gs(X,\C)
\end{tikzcd}
$$

It is not clear what should replace the question mark and work is ongoing to determine under what assumptions this result holds. In fact, L\"{o}wit \cite{jakub} proves a version of the claim for affine Bott--Samelson varieties which are not regular.

We can however clearly state the following problem.

\begin{problem}
Determine whether the implications in the second row are equivalences. That is:
\begin{enumerate}
\item does the vanishing of $H^i(\ZZ,\OO_\ZZ)$ for $i>0$ imply $H^0(\ZZ,\OO_\ZZ) = H^*_\Gs(X,\C)$;
\item does the vanishing of $H^i(\ZZ,\OO_\ZZ)$ for $i>0$ together with $H^0(\ZZ,\OO_\ZZ) = H^*_\Gs(X,\C)$ imply that $\dim \ZZ = \rk \Gs$.
\end{enumerate}
\end{problem}

\section{Singular varieties}
\label{secsing}

Theorem 5.1 in \cite{HR} shows how to recover the spectrum of equivariant cohomology of a possibly singular projective variety in a specific case. Namely, we require it to be embedded in a smooth variety with a regular action such that the restriction map on cohomology is surjective. For a general singular variety with a regular action, the situation might be much more complicated. In particular, one cannot even expect the cohomology to be concentrated in the even degrees. However, even without the surjectivity assumption, we can still infer something about a subring of the equivariant cohomology ring. Namely, we need to restrict our attention to the subring generated by the equivariant Chern classes of equivariant vector bundles. This is in some sense the subring of those elements that come from equivariant geometry. In the last 30 years, similar subrings have been considered for moduli spaces, and are known as \emph{tautological rings}. They have been introduced by Mumford in \cite{Mumford} and rose to importance with Kontsevich's work \cite{Kontsevich}. See also the expository article by Ravi Vakil \cite{Vakil}.

Let $\Gs$ be an algebraic group. For any $\Gs$-variety $X$, we define $\widetilde{H}^*_\Gs(X)$ as the subring of $H^*_\Gs(X)$ generated by the $\Gs$-equivariant Chern classes of $\Gs$-linearised vector bundles. Notice that if $X$ is a smooth variety with a regular $\Gs$-action, then by \cite[Lemma 4.14]{HR} we have $\widetilde{H}^*_\Gs(X) = H^*_\Gs(X)$.

\begin{theorem}\label{singful}
Assume that $X$ is a smooth projective variety with a regular action of a principally paired group $\Gs$. Let $Y\subset X$ be a closed $\Gs$-invariant subvariety, possibly singular. Let $\Ss$ be a Kostant section of $\Gs$. Consider the zero scheme $\ZZ^X \subset \Ss\times X$ of the vector field $V_\Ss$, as in Theorem \ref{thmhr}. Let $\ZZ^Y = \left(\ZZ^X\cap (\Ss\times Y)\right)^{\red}$ be the \textbf{reduced} intersection. Then the isomorphism $\rho:H^*_\Gs(X)\to \C[\ZZ^X]$ descends to an isomorphism $\widetilde{\rho}:\widetilde{H}^*_\Gs(Y) \to \C[\ZZ^Y]$, so that $\ZZ^Y\simeq \Spec \widetilde{H}^*_\Gs(Y)$. This makes the following diagram commute.
\begin{equation}
\label{diagsing}
\begin{tikzcd}
H^*_\Gs(X) \arrow[r, "\iota^*"] \arrow[dd,"\rho","{\mathbin{\rotatebox[origin=c]{90}{$\cong$}}}"']
& \widetilde{H}^*_\Gs(Y)  \arrow[dd,"\widetilde{\rho}","{\mathbin{\rotatebox[origin=c]{90}{$\cong$}}}"']
\\ \\
\C[\ZZ^X] \arrow[r]
& \C[\ZZ^Y]
\end{tikzcd}
\end{equation}
\end{theorem}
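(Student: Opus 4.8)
Since $\ZZ^X$ is affine by Theorem~\ref{thmhr} and $\ZZ^Y\subseteq\ZZ^X$ is a closed subscheme, the restriction of functions $\C[\ZZ^X]\to\C[\ZZ^Y]$ is surjective, and because $\ZZ^Y$ is reduced by construction its image is $\C[\ZZ^X]/I(\ZZ^Y)$ with $I(\ZZ^Y)$ the radical ideal of functions vanishing on all closed points $(s,x)$ with $x\in Y$. Composing with the isomorphism $\rho$ of Theorem~\ref{thmhr} produces a surjection $H^*_\Gs(X)\twoheadrightarrow\C[\ZZ^Y]$; write $\mathfrak a$ for its kernel. The whole theorem reduces to two assertions: $(\mathrm A)$ $\mathfrak a=\ker\bigl(\iota^*\colon H^*_\Gs(X)\to H^*_\Gs(Y)\bigr)$, and $(\mathrm B)$ $\iota^*(H^*_\Gs(X))=\widetilde H^*_\Gs(Y)$. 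Granting these, $\C[\ZZ^Y]\cong H^*_\Gs(X)/\mathfrak a=H^*_\Gs(X)/\ker\iota^*=\iota^*(H^*_\Gs(X))=\widetilde H^*_\Gs(Y)$, the composite being $\widetilde\rho$, and the square \eqref{diagsing} commutes by construction. As in the proofs above it is convenient to first treat a solvable $\Bs$ (where $H^*_\Bs(\pt)=\C[\ttt]$) and then descend along the Weyl group, exactly as in the proof of Theorem~\ref{nonreg} for a general principally paired group, matching the topological $\Ws$-action on $H^*_\Ts(X)$ with the geometric one on the $\ttt$-factor.

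\textbf{The inclusion $\ker\iota^*\subseteq\mathfrak a$ and surjectivity of $\widetilde\rho$.} Here I would use the explicit description of $\rho$ on Chern classes, just as in the proof of Lemma~\ref{lemrho} and \cite[(3.6)]{HR}: for a $\Gs$-linearised bundle $\Ee$ on $X$ and a closed point $(s,x)\in\ZZ^X$ one has $\rho(c_k^\Gs(\Ee))(s,x)=\Tr_{\Lambda^k\Ee_x}(\Lambda^k s)$, a function constant along every projective $\Gs$-orbit closure through $x$. Writing an arbitrary $c\in\ker\iota^*$ as a polynomial in equivariant Chern classes (using $H^*_\Gs(X)=\widetilde H^*_\Gs(X)$, \cite[Lemma~4.14]{HR}), and using that every closed point of $\ZZ^Y$ has, in its orbit closure, a $\Ts$-fixed point $\zeta\in Y$ at which $c$ restricts to $(\iota^*c)|_\zeta=0$ (cf.\ \cite[Lemma~5.12]{HR}), one sees $\rho(c)$ vanishes on all closed points of the reduced scheme $\ZZ^Y$, hence $c\in\mathfrak a$. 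Thus $\rho$ descends to a well-defined graded $\C[\Ss]$-algebra map $\widetilde\rho\colon\iota^*(H^*_\Gs(X))\to\C[\ZZ^Y]$, and it is surjective because $\rho$ is an isomorphism and $\C[\ZZ^X]\to\C[\ZZ^Y]$ is surjective.

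\textbf{The inclusion $\mathfrak a\subseteq\ker\iota^*$.} This is where I expect to do real work. It suffices to show $\rho(c)|_{\ZZ^Y}=0\Rightarrow\iota^*c=0$. Evaluating $\rho(c)$ on the closed points of $\ZZ^Y$ lying over regular elements of $\Ss$ — where, by the structure of $\ZZ^X$ in \cite{HR}, these are the group translates of the $\Ts$-fixed points of $X$ contained in $Y$, and where $\rho(c)(\chi(w),M\zeta)=c|_\zeta(w)$ — and using density of $\ttt^{\reg}$, one gets $(\iota^*c)|_\zeta=0$ for every $\zeta\in Y^\Ts$. The remaining point is an injectivity-of-localisation statement on the possibly singular projective variety $Y$: that $\widetilde H^*_\Gs(Y)\to\bigoplus_{\zeta\in Y^\Ts}H^*_\Gs(\pt)$ is injective. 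By the Atiyah--Bott localisation theorem the kernel of $H^*_\Gs(Y)\to H^*_\Gs(Y^\Ts)$ is the $\C[\ttt]^\Ws$-torsion submodule, so this reduces to: $\widetilde H^*_\Gs(Y)$ is torsion-free over $H^*_\Gs(\pt)=\C[\ttt]^\Ws$, equivalently (since $\C[\ZZ^Y]$ is finite over $\C[\Ss]$) every irreducible component of $\ZZ^Y$ dominates $\Ss$. This I would establish from the regularity of the action on $X$: all elements of $\Ss$ being regular, $\ZZ^X$ is finite flat and Cohen--Macaulay over $\Ss$, $\ZZ^X\cap(\Ss\times Y)$ is the zero scheme of the (tangent-to-$Y$) vector field $V^Y_\Ss$ and maps to $\Ss$ with finite fibres and non-empty generic fibre, and one argues it carries no component of dimension $<\rk\Gs$. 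I regard this equidimensionality/torsion-freeness as one of the two technical cruxes.

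\textbf{Proof of $(\mathrm B)$.} The inclusion $\iota^*(H^*_\Gs(X))\subseteq\widetilde H^*_\Gs(Y)$ is immediate since $\iota^*c_k^\Gs(\Ee)=c_k^\Gs(\Ee|_Y)$ and $H^*_\Gs(X)$ is generated by equivariant Chern classes. For the reverse inclusion one must show that every $\Gs$-linearised bundle $\F$ on $Y$ has $c_k^\Gs(\F)\in\iota^*(H^*_\Gs(X))$. I would proceed by twisting $\F$ by a power of the $\Gs$-ample $\OO_X(1)|_Y$ to obtain an equivariant surjection $V\otimes\OO_Y\twoheadrightarrow\F(n)$ from the restriction of the $\Gs$-linearised bundle $V\otimes\OO_X$ (with $V$ a suitable finite-dimensional representation), whose kernel $\mathcal K$ is again a $\Gs$-linearised bundle on $Y$; the Whitney formula then expresses $c^\Gs(\F)$ through $\iota^*c^\Gs(V\otimes\OO_X)$, $\iota^*c^\Gs_1(\OO_X(1))$ and $c^\Gs(\mathcal K)^{-1}$, reducing the claim for $\F$ to the same claim for $\mathcal K$. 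This has to be combined with a dévissage over the $\Gs$-invariant closed set $Y_{\mathrm{sing}}$ (where along $Y_{\mathrm{sm}}$ the derived restriction of $\iota_*\F$ differs from $\F$ only by the invertible correction $c^\Gs(\lambda_{-1}(N^*))$, itself restricted from $X$) and a Noetherian induction on $\dim Y$; over $\C$-coefficients the index phenomena that would obstruct the non-invariant statement disappear. I expect this identification $\iota^*(H^*_\Gs(X))=\widetilde H^*_\Gs(Y)$ to be the second, and harder, technical heart of the argument; once $(\mathrm A)$ and $(\mathrm B)$ are in place, the diagram \eqref{diagsing} and the identification $\ZZ^Y\cong\Spec\widetilde H^*_\Gs(Y)$ follow formally, and one concludes with the application to discriminantal varieties of Example~\ref{discex}.
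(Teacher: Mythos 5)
Your reduction of the theorem to the two assertions $(\mathrm A)$ and $(\mathrm B)$, the well-definedness of $\widetilde\rho$ via localisation to the fixed points lying in $Y$, the surjectivity of $\widetilde\rho$ from affineness of $\ZZ^X$, and the Weyl-group descent to the solvable case all match the paper. But for both of the steps you yourself flag as the technical cruxes, the arguments you propose have genuine gaps, and the paper resolves both at once with a single idea you are missing: \emph{push forward to the smooth ambient variety}. Given a $\Gs$-linearised bundle $\Ee$ on $Y$, the coherent sheaf $\iota_*\Ee$ on the smooth $X$ has well-defined equivariant Chern classes $c_i^\Ts(\iota_*\Ee)\in H^*_\Ts(X)$ which restrict to $c_i^\Ts(\Ee)$ on $Y$ and localise trivially at every torus-fixed point outside $Y$. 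This produces, for every $c\in\widetilde H^*_\Ts(Y)$, a lift $\hat c\in H^*_\Ts(X)$ with $\iota^*\hat c=c$ and $\hat c|_\zeta=0$ for $\zeta\in X^\Ts\setminus Y^\Ts$. That gives your $(\mathrm B)$ in one stroke, and it converts the injectivity of $\widetilde\rho$ into injectivity of localisation on $X$ (equivariant formality of the smooth $X$), never requiring any localisation statement on the singular $Y$.

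Concretely, the gaps are these. (i) Your proof of $\mathfrak a\subseteq\ker\iota^*$ rests on torsion-freeness of $\widetilde H^*_\Gs(Y)$ over $\C[\ttt]^\Ws$, which you propose to deduce from ``every component of $\ZZ^Y$ dominates $\Ss$''. That equivalence is only available \emph{after} the isomorphism $\widetilde H^*_\Gs(Y)\cong\C[\ZZ^Y]$ has been established, so the reduction is circular; and the geometric statement is not a routine dimension count: for each $\zeta\in X^\Ts\setminus Y^\Ts$ the component $C_\zeta$ of $\ZZ^X$ meets $\Ss\times Y$ in a nonempty (the unique zero of $e$ lies in $Y$) proper closed subset, which does not dominate $\Ss$, and one would have to show these pieces are absorbed into the components indexed by $Y^\Ts$ --- which is essentially the theorem again. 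Since a singular $Y$ need not be equivariantly formal, there is no soft reason for torsion-freeness either. (ii) Your d\'evissage for $(\mathrm B)$ does not terminate: the kernel $\mathcal K$ of $V\otimes\OO_Y\twoheadrightarrow\F(n)$ is another linearised bundle on the singular $Y$, of larger rank, with no decreasing induction parameter, and singular varieties do not in general admit finite resolutions by bundles restricted from $X$. Both difficulties evaporate with the pushforward lift. (One caveat worth recording: the identity $\iota^*c_i^\Ts(\iota_*\Ee)=c_i^\Ts(\Ee)$, which the paper justifies by ``$\iota^*\iota_*=\id$'', silently discards the higher $\Tor$'s of $\iota_*\Ee$ along the singular $Y$; your own remark about the $\lambda_{-1}(N^*)$ correction shows you are alert to exactly this subtlety, which the paper does not address.)
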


\begin{proof}
 As in the proof of Theorem \ref{nonreg}, the scheme $\ZZ^Y$ for a reductive or a general principally paired group is the quotient by the Weyl group action of the analogous scheme for its Borel subgroup. Similarly, if $\Ts$ is the maximal torus of $\Gs$, $\widetilde{H}^*_\Gs(X)$ is the Weyl-invariant part of $\widetilde{H}^*_\Ts(X)$, as in \cite[Lemma 4.5]{HR}. Hence we can assume that $\Gs$ is solvable. Then we actually have $H^*_\Gs(Y) = H^*_\Ts(Y)$ and $\widetilde{H}^*_\Gs(Y) = \widetilde{H}^*_\Ts(Y)$. Therefore we can in fact view $\widetilde{H}^*_\Gs(Y)\subset H^*_\Ts(Y)$ as generated by $\Ts$-equivariant Chern classes of $\Gs$-equivariant vector bundles.
 
As $X$ is smooth, by \cite[Lemma 4.14]{HR} we have that $H^*_\Ts(X)$ is already generated by the Chern classes of $\Gs$-equivariant vector bundles. Therefore the restriction $H^*_\Ts(X) \to H^*_\Ts(Y)$  actually has its image within $\widetilde{H}^*_\Ts(Y)$. This means that the restriction $\iota^*:H^*_\Ts(X)\to \widetilde{H}^*_\Ts(Y)$ is well-defined.

For any $c\in H^*_\Ts(X)$ the function $\rho(c)$ is defined on $(e+v,p)$ by localisation to a torus-fixed point in the same $\Gs$-orbit as $p$. Hence the values of $\rho(c)$ on the points of $\ZZ^Y$ are defined by localisation only to torus-fixed points in $Y$. Therefore the restriction of $\rho(-)$ to $\ZZ^Y$ factors through $\iota^*$. Notice that in this step we are using the reducedness of $\ZZ^Y$, as this allows us to recover a regular function from its values.

Therefore also $\widetilde{\rho}$ is well-defined. The diagram \eqref{diagsing} is then obviously commutative. We need to prove that $\widetilde{\rho}$ is an isomorphism. Surjectivity follows from surjectivity of the restriction $\C[\ZZ^X]\to \C[\ZZ^Y]$, as $\ZZ^Y$ is a closed subscheme of an affine scheme $\ZZ^X$.

Now we prove the injectivity of $\widetilde{\rho}$. Notice that we can push forward any $\Gs$-linearised vector bundle $\Ee$ on $Y$ to a $\Gs$-linearised coherent sheaf $\iota_*\Ee$ on $X$. Then for any $i$, the Chern class $c_i^\Ts(\iota_*\Ee) \in H^*_\Ts(Y)$ is well-defined, as $X$ is smooth. Its pullback to $H^*_\Ts(X)$ is $c_i^\Ts(\Ee)$, as $\iota^*\iota_* = \id$ for $\iota$ being a closed embedding. As $\iota_*\Ee$ is trivial outside of $Y$, it localises trivially to any torus-fixed point not in $Y$. Hence we have shown that any element $c\in\widetilde{H}^*_\Ts(Y)$ has a lift to $\hat{c}\in H^*_\Ts(X)$ which localises trivially to torus-fixed points outside of $Y$. Assume then that $\widetilde{\rho}(c) = 0$. This means that $\hat{c}$ also localises trivially to all torus-fixed points in $Y$, as $\widetilde{\rho}(c)$ is computed by localisation to those points and we can recover them from $\widetilde{\rho}(c)$ on the regular semisimple locus. Therefore $\hat{c} = 0$, as the localisation on $X$ is injective by equivariant formality and \cite[Theorem 1.6.2]{GKM}. Hence $c=0$. This means that $\widetilde{\rho}$ is injective.
\end{proof}

\begin{remark}
It is essential that one takes $\ZZ^Y$ to be the \emph{reduced} intersection, not just a scheme-theoretic one. In the appendix of \cite{CarBru} Carrell shows that for any non-A type there exists a Schubert variety for which a non-equivariant version of the result does not hold. As the cohomology of Schubert varieties is already generated by Chern classes of equivariant bundles, the above theorem will have to fail if we skip the reduction in the definition of $\ZZ^Y$.
\end{remark}

\begin{example} \label{discex}
This is an extension of Remark 5.4 from \cite{HR}. Described there is the discriminant variety $Y\subset X = \PP^3$. The projective space $\PP^3$ inherits the action of $\SL_2$ as a projectivisation of the 4-dimensional irreducible representation. Consider the action of $\Gs = \SL_2$ on it. If we view the elements of $\C^4$ as the polynomials $ax^3 + bx^2y+ cxy^2 + dy^3$, then $Y$ is defined by the equation
$$27a^2d^2 + 4ac^3 + 4b^3d - b^2c^2 - 18abcd = 0.$$
It parametrises the polynomials with multiple roots, or rather -- in homogeneous language -- the polynomials that contain a square in their decomposition into linear terms. The variety consists of two $\SL_2$ orbits. The first one is smooth, open and dense in $Y$ and it consists of polynomials with two distinct factors:
$$U = \{f^2g | f,g\in \Span_\C[x,y]  f\neq g\}.$$
The other one is closed in $Y$ and it consists of cubes of linear factors:
$$D = \{f^3 | f\in\Span_\C[x,y] \}.$$
The variety $Y$ is singular along the latter, see Figure \ref{discr}.

\begin{figure}[ht!]
\captionsetup{width=.65\linewidth}
\begin{center}
\includegraphics[width=.65\linewidth]{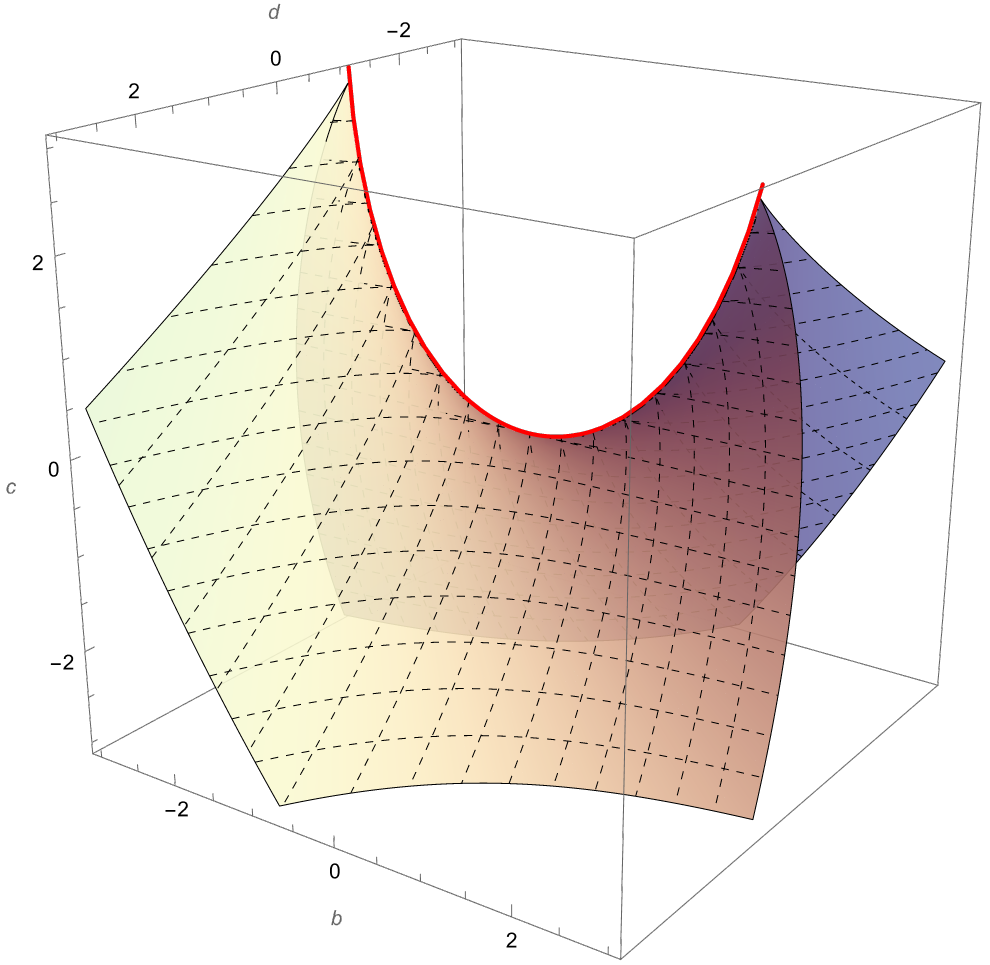}
\captionof{figure}{The discriminantal variety $Y\subset \PP^3$. Affine real part shown, for $a=1$. It is the locus of polynomials $x^3+bx^2+cx+d$ with a double root. Singular locus in red.}
\label{discr}
\end{center}
\end{figure}

However, $Y$ contains all the $\Ts$-fixed points of $X$ and so $\ZZ^X = \ZZ^Y$. Topologically $Y$ is homeomorphic to $\PP^1\times \PP^1$. Therefore $H^*_\Gs(Y)$ cannot be isomorphic to $\C[\ZZ^Y] = \C[\ZZ^X] \simeq H^*_\Gs(\PP^3)$. The homeomoprhism $\PP^1\times\PP^1\to Y$ maps
$$(f,g) \mapsto f^2g,$$
where we view elements of $\PP^1$ as homogeneous linear polynomials. We have
$$H^*(\PP^1\times \PP^1) = \C[u,v]/(u^2,v^2)$$
with $\deg u = \deg v = 2$. Hence $P_{H^*(Y)}(t) = (1+t^2)^2$. From the equivariant formality we then have
$$P_{H^*_{\Gs}(Y)}(t) = P_{H^*(Y)}(t) \cdot P_{H^*_\Gs}(t) = \frac{(1+t^2)^2}{(1-t^4)} = 1 + 2(t^2 + t^4 + t^6 + \dots).$$
On the other hand, 
$$P_{\widetilde{H}^*_{\Gs}(Y)}(t) = P_{H^*_{\Gs}(X)}(t) = \frac{1+t^2+t^4+t^6}{(1-t^4)} = 1 + t^2 + 2(t^4 + t^6 + \dots).$$

Notice that here the restriction on equivariant cohomology is not surjective, but injective -- however, only in equivariant cohomology. As we can see from the Poincar\'{e} series, in $\widetilde{H}^*_{\Gs}(Y)$ there is one missing dimension in $H^2$, in comparison with $H^*_\Gs(X)$. In fact, by the description of the isomorphism $\PP^1\times\PP^1\to Y$, one sees that the image of $\widetilde{H}^2_\Gs(Y)$ in $H_\Gs^2(\PP^1\times\PP^1)$ is spanned by $2\hat{u} + \hat{v}$, where $\hat{u}$ and $\hat{v}$ are the generators of the two copies of $H^*_\Gs(\PP^1)$.

Notice that the analogous example can be constructed in $\PP^n$ for any $n\ge 3$. If we let $Y_n\subset \PP^n$ be the zero locus of the discriminant, then exactly the same reasoning as above will tell us that
$$\widetilde{H}^*_{\Gs}(Y_n) \simeq H^*_\Gs(\PP^n).$$
However, the full cohomology ring ${H}^*_{\Gs}(Y_n)$ is not that easy to compute anymore. We always have a surjective birational morphism $\PP^1\times \PP^{n-2} \to Y_n$ defined as above, via $(f,g)\mapsto (f^2g)$ for $f\in\PP^1$, $g\in\PP^{n-2}$. However, the map is in general not an injection.
\end{example}

\begin{remark}
 Theorem \ref{singful} requires the group $\Gs$ to act regularly on the ambient space $X$ -- and hence also on $Y$. However, in Theorem \ref{nonreg} we proved a version for a smooth $\Gs$-variety, where regularity was not necessary. It is then an obvious question whether there is a common generalisation, that would work for non-regular, singular varieties. It is not yet clear if that is possible or not. An important part of Theorem \ref{singful} is that the scheme $\ZZ^X$ is reduced. However, in a non-regular case, the scheme might be non-reduced, as in Example \ref{exthick}, and hence defining $\ZZ^Y$ as a reduced intersection cannot work in general. Combining both results is an interesting challenge that is yet to be resolved.
\end{remark}

\begin{remark}
A related work of Hausel \cite{bigalg} studies the equivariant cohomology ring, as well as equivariant intersection cohomology, of affine Schubert varieties. They are realised as rings of functions of zero schemes of commuting vector fields on a projective space. The definition of the vector fields is more involved.
\end{remark}

\section{K-theory conjecture for GKM spaces} \label{seck}
In \cite{HR} a question was posed about extensions of Theorem \ref{thmhr} to other cohomology theories. The first non-trivial one, different from equivariant cohomology, is equivariant K-theory. Recall that the equivariant cohomology $H^*_\Gs(\pt)$ of the point is equal to $\C[\ttt]^\Ws = \C[\geg]^\Gs$. On the other hand, the equivariant algebraic K-theory $K^0_\Gs(\pt)$ of the point is equal to the representation ring $R(\Gs)$ of $\Gs$. After tensoring with $\C$, this becomes isomorphic to $\C[\Ts]^\Ws = \C[\Gs]^\Gs$ \cite[Theorem 6.1.4]{ChGi}.

Hence, to switch from equivariant cohomology to equivariant K-theory, we switch from the $\Gs$-invariant functions on the Lie algebra, to the $\Gs$-invariant functions on the group. This suggests that in line with the previous results, one could perhaps recover K-theory by considering the action of the group instead of the Lie algebra. To this end, for an action of $\Gs$ on a scheme $X$, define the \emph{fixed point scheme} $\Fix_\Gs(X)$ by the following pullback diagram, cf. \cite[(1.1.1)]{Low}.

$$ \begin{tikzcd}
  \Fix_\Gs(X) \arrow{r}\arrow{d} &
  \Gs\times X\arrow{d}{\rho\times \pi_2}  \\
  X \arrow{r}{\Delta} &
  X\times X.
\end{tikzcd}$$

Here $X$ maps to $X\times X$ diagonally and the map $\rho\times \pi_2$ maps $(g,x)$ to $(gx,x)$. Therefore, the $\C$-points of $\Fix_\Gs(X)$ parametrise the pairs $(g,x)\in \Gs\times X$, where $gx = x$. Hausel \cite{Tamas} states the following conjecture, based on Theorem \ref{thmhr}.

\begin{conjecture}\label{kthe}
 Assume that a principally paired group $\Gs$ acts on a smooth projective variety regularly. Let $\Gs$ act on $\Fix_\Gs(X)$ by $g\cdot (h,x) = (ghg^{-1},gx)$. Then the ring $\C[\Fix_\Gs(X)]^\Gs$ of the $\Gs$-invariant functions on $\Fix_\Gs(X)$, as an algebra over $\C[\Gs]^\Gs\cong K^0_{\Gs}(\pt)\otimes \C$, is isomorphic to the equivariant algebraic K-theory $K^0_{\Gs}(X)\otimes \C$.
 $$ \begin{tikzcd}
  \C[\Fix_\Gs(X)]^\Gs \arrow{r}{\cong} &
  K_\Gs^0(X)\otimes \C  \\
  \C[\Gs]^\Gs \arrow{r}{\cong} \arrow{u}&
  K_\Gs^0(\pt)\otimes \C \arrow{u}.
\end{tikzcd}$$
\end{conjecture}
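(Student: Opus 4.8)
The plan is to follow the architecture of the cohomology theorem (Theorem~\ref{thmhr}) as closely as the group-versus-Lie-algebra dictionary permits, replacing the Kostant section and the zero scheme by the conjugation action on $\Gs$ and by the fixed-point scheme $\Fix_\Gs(X)$. First I would construct the comparison map. For a $\Gs$-linearised vector bundle $\Ee$ on $X$ and a point $(g,x)\in\Fix_\Gs(X)$, the condition $gx=x$ means that the linearisation gives an automorphism of the fibre $\Ee_x$, so one may set
$$\Phi([\Ee])(g,x) = \Tr\left(g\mid \Ee_x\right).$$
Additivity of the trace on short exact sequences and multiplicativity under tensor products show that $\Phi$ descends to a ring homomorphism $K^0_\Gs(X)\otimes\C\to\C[\Fix_\Gs(X)]$, and the simultaneous conjugation on the $\Gs$-factor and translation on $X$ force the image into the $\Gs$-invariants; on $K^0_\Gs(\pt)\otimes\C=\C[\Gs]^\Gs$ it is the identity, so $\Phi$ is a map of $\C[\Gs]^\Gs$-algebras. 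This is the exact multiplicative analogue of the formula $\rho(c_k^\Ts(\Ee))(e+w,x)=\Tr_{\Lambda^k\Ee_x}(\Lambda^k(e+w)_x)$ used in the proof of Lemma~\ref{lemrho}.

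Next I would reduce to the solvable case. Since $R(\Gs)\otimes\C=(R(\Ts)\otimes\C)^\Ws$ and $R(\Bs)=R(\Ts)$ (every $\Bs$-module has a composition series with $\Ts$-eigenline quotients), one has $K^0_\Bs(X)\cong K^0_\Ts(X)$ and hence $K^0_\Gs(X)\otimes\C=(K^0_\Ts(X)\otimes\C)^\Ws$. I would then establish the matching geometric statement $\C[\Fix_\Gs(X)]^\Gs=(\C[\Fix_\Bs(X)]^\Bs)^\Ws$ with the Weyl action realised on the $\Gs$-factor, mimicking the last part of the proof of Theorem~\ref{nonreg}: as in \eqref{weylact}, the localisation of a class to a fixed point, evaluated on the regular semisimple part of $\Ts$, transforms under $\Ws$ by precomposition, and injectivity of localisation pins the topological and geometric actions together. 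This isolates the core assertion $K^0_\Ts(X)\otimes\C\cong\C[\Fix_\Bs(X)]^\Bs$ for a solvable, regularly acting $\Bs$.

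For injectivity of $\Phi$ in the solvable case I would invoke the Thomason--Atiyah--Segal localisation theorem. Regularity forces the fixed set $X^\Ts$ to be finite (as in the discussion preceding Lemma~\ref{tate}), and for a regular semisimple $g\in\Ts$ one has $X^g=X^\Ts$. Evaluating $\Phi([\Ee])$ at the points $(g,p)$ with $p\in X^\Ts$ recovers exactly the characters $g\mapsto\Tr(g\mid\Ee_p)$ of the fibre representations, and since these characters, sampled on the dense regular semisimple locus, determine the localisations $[\Ee]|_p\in R(\Ts)\otimes\C$, the vanishing $\Phi([\Ee])=0$ gives $[\Ee]|_p=0$ for all $p$; by injectivity of K-theoretic localisation for such spaces this yields $[\Ee]=0$.

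The main obstacle is surjectivity, equivalently the precise size-matching of the two rings, and this is exactly where the general conjecture remains open. The natural tool is the Koszul resolution coming from the defining fibre square: $\Fix_\Gs(X)$ is cut out in $\Gs\times X$ by the $n=\dim X$ equations expressing $gx=x$, whose conormal is the pullback along $\pi_2$ of the conormal $\Omega^1_X$ of the diagonal. When the action is regular one expects $\dim\Fix_\Gs(X)=\dim\Gs$, so that the Koszul complex
$$0\to\pi_2^*\Omega^n_X\to\cdots\to\pi_2^*\Omega^1_X\to\OO_{\Gs\times X}\to\OO_{\Fix_\Gs(X)}\to 0$$
resolves $\OO_{\Fix_\Gs(X)}$, and a hypercohomology spectral sequence analogous to the Carrell--Lieberman one of Theorem~\ref{clthm} computes $\C[\Fix_\Gs(X)]^\Gs$. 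The difficulty, absent in the cohomological setting, is twofold: one must prove degeneration of this spectral sequence in the multiplicative framework, where there is no Lefschetz $\ssl_2$-action available to force it; and one must identify the resulting associated graded with $K^0_\Gs(X)\otimes\C$ rather than merely with $\bigoplus_p H^p(X,\Omega^p)$ tensored with the base ring. I expect both points to be genuinely hard in full generality. In the GKM situation of Theorem~\ref{gkmk} they are sidestepped by the explicit combinatorial description of both $K^0_\Ts(X)$ and $\Fix_\Ts(X)$ in terms of the moment graph, which is why only that case is settled here.
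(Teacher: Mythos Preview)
The statement you are addressing is a \emph{conjecture}, not a theorem: the paper does not prove it. The only result in that direction is Theorem~\ref{gkmk}, for a torus acting on a GKM variety, which as the paper itself notes is not even a special case of the conjecture (a torus action is never regular on a positive-dimensional variety). So there is no proof in the paper to compare your proposal against.

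That said, your write-up is an honest and accurate sketch of a strategy rather than a proof, and you say so explicitly when you reach surjectivity. The construction of the trace map $\Phi$, the reduction to the Borel via $K^0_\Gs=(K^0_\Ts)^\Ws$, and the injectivity argument by localisation to $X^\Ts$ on the regular semisimple locus are all sound and in the spirit of the cohomological proofs in the paper. Your identification of the two obstructions to surjectivity (degeneration of the multiplicative Koszul spectral sequence without a Lefschetz $\ssl_2$, and identification of the associated graded with $K^0_\Gs(X)\otimes\C$) is exactly why the statement remains conjectural. One small caution on the solvable step: you assert $\C[\Fix_\Gs(X)]^\Gs=(\C[\Fix_\Bs(X)]^\Bs)^\Ws$ by analogy with the end of the proof of Theorem~\ref{nonreg}, but there the argument uses a section $\ttt\to\Ss$ and compares pointwise values over the regular locus; the corresponding comparison between $\Fix_\Bs(X)$ and $\Fix_\Gs(X)$ is not set up anywhere in the paper and would itself need justification (a Steinberg-section type argument, as the paper hints). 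In short: your outline is reasonable and correctly flags its own gap, but it is not a proof, and neither is anything in the paper.
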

 
One can then additionally ask three questions. First, whether we can also formulate and prove the ``non-total'' version of the conjecture, where the total fixed point scheme is replaced with the fixed point scheme over some part of $\Gs$. It seems that an appropriate equivalent of the Kostant section is the Steinberg section \cite[4.15]{HumConj}, therefore we would like to prove also the analogue of Theorem \ref{thmhr}, with the Steinberg section replacing the Kostant section. If $\Gs$ a semisimple simply connected group, Holmes \cite{Holmes} and L\"{o}wit \cite[Example 1.38]{Low} have proved that the fixed point scheme for any partial flag variety $\Gs/\Ps$ over the Steinberg section is the spectrum of the equivariant K-theory.

Additionally, we are given the equivariant Chern character map, from equivariant K-theory to cohomology. This means that on the scheme level, we should get an analogous map in the other direction. Note that as the Chern character maps to the completion of cohomology, it will not give rise to an algebraic map. It has to be rather considered as a map from a formal scheme or a complex analytic map. In fact, we expect that the Chern character yields the $\Gs$-equivariant map (``geometric Chern character'')
\begin{equation} \label{cherngeo}
    \operatorname{ch_G}:(v,x) \mapsto (\exp(v), x).
\end{equation}
from $\ZZ_\tot$ to $\Fix_\Gs(X)$. Therefore along with proving Conjecture \ref{kthe}, we would like to see the Chern character geometrically. 

Third, one might ask again how important the regularity assumption is. This is elaborated on in Remark \ref{remlow} below. In this section, we prove a K-theory analogue of the GKM theorem for cohomology \cite[Theorem 5.16]{HR}.

\begin{theorem}\label{gkmk}
Let a torus $\Ts \cong (\Cs)^r$ act on a smooth projective complex variety $X$ with finitely many zero- and one-dimensional orbits. Let $\Fix'_\Ts(X) = \left(\Fix_\Ts(X)\right)^{\red}$ be the closed subscheme of $\Fix_\Ts(X)$ given by reduction. Then $\C[\Fix'_\Ts(X)] \cong K^0_\Ts(X)\otimes \C$ as algebras over $\C[\Ts]\simeq K^0_\Ts(\pt)\otimes \C$:
$$ \begin{tikzcd}
  \C[\Fix'_\Ts(X)] \arrow{r}{\cong}  &
  K^0_\Ts(X)\otimes \C   \\
   \C[\Ts] \arrow{r}{\cong} \arrow{u}&
  K^0_\Ts(\pt)\otimes \C \arrow{u}.
\end{tikzcd}$$
\end{theorem}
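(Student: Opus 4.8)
The strategy is to identify both sides with the same combinatorial ``GKM ring'' of compatible tuples indexed by the $\Ts$-fixed points, mirroring the proof of the cohomological GKM statement \cite[Theorem 5.16]{HR} under the dictionary that trades the Lie algebra for the group, the zero scheme of the total vector field for the fixed-point scheme, and the linear congruence $f_p\equiv f_q\bmod\chi_E$ in $\C[\ttt]$ for the multiplicative one $f_p\equiv f_q\bmod(1-e^{\chi_E})$ in $\C[\Ts]$. Write $X^\Ts=\{p_1,\dots,p_m\}$; for each one-dimensional orbit closure (edge) $E\cong\PP^1$ joining fixed points $p,q$, let $\chi_E$ be the character through which $\Ts$ acts on $E$ and $\Ts_E=\ker\chi_E$ the codimension-one subtorus fixing $E$ pointwise, so $\C[\Ts]/(1-e^{\chi_E})\cong\C[\Ts_E]$. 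First I would recall the K-theoretic GKM presentation: since $X$ is a smooth projective GKM variety (hence equivariantly formal, $K^0_\Ts(X)$ free over $R(\Ts)$), the restriction to fixed points identifies
$$K^0_\Ts(X)\otimes\C\ \xrightarrow{\ \sim\ }\ \mathcal R:=\Big\{(f_p)_p\in\textstyle\bigoplus_{p}\C[\Ts]\ :\ f_p|_{\Ts_E}=f_q|_{\Ts_E}\ \text{for every edge }E=\{p,q\}\Big\},$$
which one obtains from the localisation theorem together with one-skeleton Mayer--Vietoris and the elementary fact $K^0_\Ts(\PP^1)\otimes\C=\{(a,b):a\equiv b\bmod(1-e^{\chi})\}$.

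On the geometric side, each slice $\Ts\times\{p_i\}$ lies in $\Fix_\Ts(X)$, yielding a $\C[\Ts]$-algebra homomorphism $\eta\colon\C[\Fix'_\Ts(X)]\to\bigoplus_p\C[\Ts]$, $f\mapsto(f|_{\Ts\times\{p_i\}})_i$, compatible with the $\C[\Ts]$-structures as each $\Ts\times\{p_i\}\to\Ts$ is the identity (this also makes the $\Gs$-invariance of Conjecture \ref{kthe} automatic here, $\Ts$ being abelian). The structural observation I would use is that every irreducible component of $\Fix'_\Ts(X)$ has the form $H\times F$ with $H\le\Ts$ a subtorus and $F$ a (smooth, connected) component of $X^H$ --- indeed $\Fix_\Ts(X)=\{(g,x):g\in\Stab_\Ts x\}$ and, $\Ts$ being abelian, stabilisers are constant along orbits. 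As $F$ is projective and connected it contains a $\Ts$-fixed point and $\OO(H\times F)=\OO(H)$. This gives at once: (i) $\eta$ is injective --- if all $f|_{\Ts\times\{p_i\}}=0$ then on each component $H\times F$ the function $f$ is pulled back from $\OO(H)$ and vanishes on $H\times\{p\}$ for a fixed point $p\in F$, hence $f\equiv0$ on every component and $f=0$ since $\Fix'_\Ts(X)$ is reduced; and (ii) $\eta(f)\in\mathcal R$ --- for an edge $E=\{p,q\}$ the component through $\Ts_E\times E$ is projective over its subtorus, so $f$ restricts there to a pullback from $\Ts_E$ equal to both $f_p|_{\Ts_E}$ and $f_q|_{\Ts_E}$.

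The remaining, and main, point is that $\eta$ surjects onto $\mathcal R$. I would first treat the ``one-skeleton'' closed subscheme $W=\bigcup_p(\Ts\times\{p\})\cup\bigcup_E(\Ts_E\times E)\subseteq\Fix'_\Ts(X)$: it is reduced with smooth irreducible components (affine tori and products $\PP^1\times\Ts_E$), glued along Cartier divisors in the nodal manner $\A^{r-1}\times\{xy=0\}$, hence seminormal, and $\C[W]$ is exactly the ring of tuples of regular functions on the components agreeing on overlaps --- which one reads off to be precisely $\mathcal R$. Given $(f_p)\in\mathcal R$ this produces $h\in\C[W]$ with $h|_{\Ts\times\{p_i\}}=f_p$; I then extend $h$ over the remaining components $H\times F$ of $\Fix'_\Ts(X)$ (those with $\dim F\ge1$, coming from orbits of positive dimension, including $\{1\}\times X$): such a component meets $\bigcup_p\Ts\times\{p\}$ along $H\times(X^\Ts\cap F)$, and the GKM structure of $X^H$ --- whose components are themselves GKM varieties with the subgraph of $H$-trivial edges --- together with the relations defining $\mathcal R$ forces $f_p|_H$ to be independent of $p\in X^\Ts\cap F$; the unique extension is the pullback of this common function, and these pieces agree with $h$ and pairwise on all overlaps. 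Since $\Fix'_\Ts(X)$ is again a seminormal union of smooth components meeting along Cartier-divisor loci, they glue to $f\in\C[\Fix'_\Ts(X)]$ with $\eta(f)=(f_p)$; matching of the ring and $\C[\Ts]$-algebra structures is then automatic.

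The genuine obstacle, then, is the surjectivity of $\eta$, and within it the seminormality/gluing input --- equivalently, that $\C[\Fix'_\Ts(X)]$ is reflexive (torsion-free and $S_2$) over $\C[\Ts]$, so that it is recovered from its behaviour in codimension $\le1$, where one checks directly that only the torus and $\PP^1$-product components survive over the codimension-one points and that $\C[\Fix'_\Ts(X)]\to\mathcal R$ is then an isomorphism. Torsion-freeness is easy (read off from the slices $\Ts\times\{p\}$), but the $S_2$/seminormality statement --- most cleanly a local analysis of $\Fix_\Ts(X)$ transverse to its strata, or a \v{C}ech computation along the one-skeleton --- is where the real work lies; by contrast the injectivity and the inclusion $\mathrm{im}\,\eta\subseteq\mathcal R$ are soft consequences of the projectivity of $X$ (the fibres of $\Fix'_\Ts(X)\to\Ts$ are projective, hence contribute only constants) and of the K-theoretic localisation theorem.
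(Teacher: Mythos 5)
Your construction of the restriction map $\eta$ to $\bigoplus_p\C[\Ts]$, the proof of its injectivity via projective connected subvarieties through each point of the fibre, and the verification that its image lands in the GKM ring $\mathcal R$ all match the paper's argument (which uses \cite[Lemma 5.8]{HR} and the subvarieties $\{t\}\times\overline{\Ts\cdot p}$ and $\{t\}\times\overline{E_i}$ in exactly the way you describe). The gap is in the surjectivity of $\eta$, which you yourself flag as ``the genuine obstacle'' and ``where the real work lies'': your proposed route requires knowing that $\Fix'_\Ts(X)$ is a seminormal (or $S_2$) union of components glued nodally along Cartier divisors, that $\C[W]=\mathcal R$ for the one-skeleton $W$, and that functions extend uniquely and compatibly over all higher-dimensional components $H\times F$. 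None of this is established, and it is not soft: the local structure of $\Fix_\Ts(X)$ transverse to its strata, and even the precise list of its irreducible components (note that stabilisers in $\Ts$ are diagonalizable but possibly disconnected, so components can live over cosets of subtori rather than subtori), would each need a separate argument. As written, the proof is incomplete at its central step.

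The paper sidesteps this entirely by building the map in the \emph{other} direction: for a $\Ts$-linearised bundle $\Ee$ it sets $\rho(\Ee)(t,p)=\Tr_{\Ee_p}(t)$, checks additivity on exact sequences so that $\rho$ descends to $K^0_\Ts(X)\otimes\C$, and then verifies $\tau\circ\rho=\id$ by observing that restricting $\rho(\Ee)$ to $\Ts\times X^\Ts$ is literally the localisation of $[\Ee]$ to the fixed points, whose image is all of $\mathcal R$ by Rosu's theorem \cite[Corollary A.5]{Rosu}. Surjectivity of $\tau$ onto $\mathcal R$ is then automatic, with no analysis of the scheme structure of $\Fix'_\Ts(X)$ beyond reducedness. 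If you want to salvage your approach you would need to supply the seminormality input; otherwise the trace construction is the missing idea.
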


\begin{proof}
We proceed as in the proof of \cite[Theorem 5.16]{HR}. We first construct a map
$$\rho: K^0_\Ts(X)\otimes \C \to \C[\Fix'_\Ts(X)]$$
and then an injective left inverse.

FOr any $x\in K^0_\Ts(X)$ we can define $\rho(x)$ by its values, as $\Fix'_\Ts(X)$ is reduced. For any $\Ts$-linearised vector bundle $\Ee$ on $X$ let
$$\rho(\Ee)(t,p) = \Tr_{\Ee_p}(t).$$
Note that if $0\to \Ee \to \F \to \Geg \to 0$ is an exact sequence of $\Ts$-linearised vector bundles, then for any $(t,p)\in \Fix'_\Ts(X)$ we have $\Tr_{\F_p}(t) = \Tr_{\Ee_p}(t) + \Tr_{\Geg_p}(t)$. Therefore the function $\rho$ is well defined on $K^0_\Ts(X)$, and by definition it always gives a regular function on $\Fix'_\Ts(X)$.

Let us denote the $\Ts$-fixed points by $\zeta_1$, $\zeta_2$, \dots, $\zeta_s$ and the one-dimensional orbits by $E_1$, $E_2$, \dots, $E_{\ell}$. The closure of any $E_i$ is an embedding of $\PP^1$ and contains two $\Ts$-fixed points $\zeta_{i_0}$ and $\zeta_{i_\infty}$, which for any $x\in E_i$ are equal to the limits $\lim_{t\to 0} t x$ and $\lim_{t\to\infty} tx$. The action of $\Ts$ on $E_i$ has a kernel $K_i$ of codimension $1$. Then by \cite[Corollary A.5]{Rosu} the localisation $K_\Ts(X)\otimes \C\to K_\Ts(X^\Ts)\otimes \C$ is injective and its image is

$$
H = \left\{
(f_1,f_2,\dots,f_s)\in \C[\Ts]^s  \, \bigg| \, f_{i_0}|_{K_i} = f_{i_\infty}|_{K_i} \text{ for } j=1,2,\dots,\ell
\right\}.
$$
We will use the following lemma \cite[Lemma 5.10]{HR}.

\begin{lemma}\label{projinj}
 Let $Y$ be a reduced scheme over a field $k$. Assume that $Z$ is a closed subvariety and every closed point $p\in Y$ is contained in a projective connected subvariety that intersects $Z$. Then the restriction map on regular functions $k[Y]\to k[Z]$ is injective.
\end{lemma}

Take any $(t,p)\in \Fix'_\Ts(X)$. Then $p$ lies in the fixed point scheme of $t\in\Ts$. As $\Ts$ is commutative, any translate of $p$ is also a fixed point of $t$. Therefore we have $\{t\}\times \overline{T\cdot p}\subset \Fix'_\Ts(X)$ as a closed projective subvariety. Then by the Borel fixed point theorem it contains a fixed point of the whole torus. Hence the conditions of Lemma \ref{projinj} are satisfied for the inclusion $\Ts\times X^\Ts \subset \Fix_\Ts(X)$, so the restriction of global functions $\C[\Fix'_\Ts(X)]\to\C[\Ts\times X^\Ts]$ is injective.

From this, we construct an injective left inverse of $\rho$. First, note that $\C[\Ts\times X^\Ts] = \C[\Ts]^s$. Then notice that the restriction $\tau:\C[\Fix'_\Ts(X)]\to\C[\Ts\times X^\Ts] = \C[\Ts]^s$ actually maps into $H$. Indeed, take $i\in\{1,2,\dots,\ell\}$. We want to prove that for any $f\in\C[\Fix'_\Ts(X)]$ the functions $\tau(f)|_{\Ts\times \zeta_{i_0}}$ and $\tau(f)|_{\Ts\times \zeta_{i_\infty}}$ are equal as functions on $\Ts$.

This follows as in the fixed point scheme, for any $t\in K_i$ both points $(t,\zeta_{i_0})$ and $(t,\zeta_{i_\infty})$ lie in the same connected projective subvariety $t\times \overline{E_i}$. Therefore the values of $f$ on those two points are the same.

Having proved that $\tau$ is an injective map to $H\simeq K_\Ts(X)\otimes\C$, we only need to prove that $\tau\circ\rho = \id$. Take a class $[\Ee]\in K^0_\Ts(X)$, where $\Ee$ is a $\Ts$-linearised vector bundle on $X$. Then $\tau\circ\rho([\Ee])$ is a function on $\Ts\times X^\Ts$ which at the point $(t,\zeta_i)$ attains the value $\Tr_{\Ee_{\zeta_i}}(t)$. When we fix $\zeta_i$, this means that we restrict $\Ee$ to $\zeta_i$ to get a representation of $\Ts$, and then we compute the trace of $t$. But this is exactly how we define the isomorphism $K_0^*(\pt) \cong \C[\Ts]$. Hence $\tau\circ\rho([\Ee])$, as an element on $H\subset \C[\Ts]^s$ is exactly the localisation of $\Ee$ to $X^\Ts$ -- and that localisation is the given isomorphism between $K_\Ts(X)\otimes \C\cong H$.
\end{proof}

\begin{remark}
The zeros of a torus action on a smooth variety are reduced, cf. \cite[13.1]{Milne}. Therefore the author also expects the scheme $\Fix_\Ts(X)$ to actually be reduced in this situation, so that $\Fix_\Ts(X) = \Fix'_\Ts(X)$. However, the exact argument is missing.
\end{remark}

\begin{remark} \label{remlow}
A big advantage of working with K-theory is that it generalises easily to arbitrary base fields. The situation is much more complicated for cohomology, where over finite fields there are many different competing cohomology theories. Therefore Conjecture \ref{kthe} can be stated over any field. In fact, L\"{o}wit \cite{Low} proves a (derived) version for toric varieties and affine Schubert varieties over finite fields. Notice that in both cases, the $\Gs$-varieties are usually not regular.
\end{remark}

\begin{remark}
    In certain situations, we have constructed the ring homomorphisms $\rho_H:H^*_\Gs(X)\to \C[\ZZ_\tot^{\red}]$ as in Theorem \ref{thmhr} and $\rho_K:K^0_\Gs(X)\otimes \C\to \C[\Fix'_\Gs(X)]$ as in Theorem \ref{gkmk}. Whenever they are both well-defined, the complex analytic map $\ch_\Gs$ $\eqref{cherngeo}$ is compatible with the Chern character $\ch:K^0_\Gs(X)\otimes \C \to H^*_\Gs(X)$. This means that the following diagram commutes.
    $$
    \begin{tikzcd}
        K^0_\Gs(X)\otimes\C\arrow{d}{\rho_K}  \arrow{r}{\ch}  & H^*_\Gs(X)\arrow{d}{\rho_H}\\
        H^*_\Gs(X) \arrow{r}{\ch_G^*} & \C[\ZZ_\tot^{\red}]
    \end{tikzcd}
    $$
This is a simple consequence of the following two facts:
    \begin{enumerate}
        \item For any $\Gs$-linearised vector bundle $\Ee$ and $(t,p)\in \Fix_\Gs(X)$ we have $\rho_K(\Ee)(t,p) = \Tr_{\Ee_p}(t)$;
        \item For any $\Gs$-linearised vector bundle $\Ee$, a nonnegative integer $k$, and $(v,p)\in \ZZ_\tot(X)$ we have
        $\rho_H(c_k(\Ee)) = \Tr_{\Lambda^k \Ee_p}(\Lambda^k (v)_x)$
    \end{enumerate}
and of the construction of the Chern character from Chern roots.
\end{remark}

\end{document}